\newtheorem{theorem}{Theorem}[section]
\newtheorem{proposition}[theorem]{Proposition}
\newtheorem{corollary}[theorem]{Corollary}
\newtheorem{lemma}[theorem]{Lemma}
\theoremstyle{definition}
\newtheorem{definition}[theorem]{Definition}
\newtheorem{notation}[theorem]{Notation}
\numberwithin{equation}{section}
\theoremstyle{remark}
\newtheorem{remark}[theorem]{Remark}
\newcommand{\oMc}{\overline{\mathcal{M}}}
\newcommand{\noMc}{\mathcal{M}}
\newcommand{\oEc}{\overline{\mathcal{E}}}
\newcommand{\Cc}{\mathcal{C}}
\newcommand{\Dc}{\mathcal{D}}
\newcommand{\Ec}{\mathcal{E}}
\newcommand{\Hc}{\mathcal{H}}
\newcommand{\Oc}{\mathcal{O}}
\newcommand{\Pc}{\mathcal{P}}
\newcommand{\Nb}{\mathbb{N}}
\newcommand{\Pb}{\mathbb{P}}
\newcommand{\Zb}{\mathbb{Z}}
\newcommand{\Aut}{Aut}
\newcommand{\Spec}{Spec}
\newcommand{\res}{res}
\newcommand{\gl}{gl}
\DeclareMathOperator{\PGL}{PGL}
\begin{document}
\title{Modular Operads of Embedded Curves}
\author{Satoshi Kondo, Charles Siegel, and Jesse Wolfson}
\address{Faculty of Mathematics, Higher School of Economics, Moscow, Russia}
\email{skondo@hse.ru}
\address{Kavli IPMU (WPI), The University of Tokyo, Kashiwa, Chiba 277-8583, Japan}
\email{charles.siegel@ipmu.jp}
\address{Department of Mathematics, The University of Chicago, Chicago, USA}
\email{wolfson@math.uchicago.edu}

\begin{abstract}
    For each $k\ge 5$, we construct a modular operad $\oEc^k$ of ``$k$-log-canonically embedded'' curves. We also construct, for $k\ge 2$, a stable cyclic operad $\oEc^k_c$ of such curves, and, for $k\ge 1$, a cyclic operad $\oEc^k_{0,c}$ of ``$k$-log-canonically embedded'' rational curves.
\end{abstract}
\thanks{The first author was partially supported by Grant-in-Aid for Young Scientists (B) 30372577. The first and second authors
were partially supported by World Premier International Research Center Initiative (WPI Initiative), MEXT Japan. The third author was partially supported by an NSF Post-doctoral Research Fellowship under Grant No.\ DMS-1400349.}

\subjclass[2010]{14H10 (Primary), 18D50 (Secondary)}

\maketitle

\section{Introduction}
\begin{definition}
    Let $S$ be a scheme. We define a \emph{$k$-log-canonically embedded} stable marked curve
    $(\Cc,\{\sigma_i\}_{i=1}^n,\eta)$ over $S$ to be a stable marked curve $(\pi\colon\Cc\to S,\{\sigma_i\}_{i=1}^n)$, along with a projective embedding by a \emph{complete} linear system
    \begin{equation*}
        \eta\colon\Cc\to\Pb_S\left(\pi_\ast\omega_{\Cc/S}\left(\sum_{i=1}^n\sigma_i\right)^{\otimes k}\right)^\vee.
    \end{equation*}
    Isomorphisms of $k$-log-canonically embedded stable marked curves are defined in the natural manner.
\end{definition}
A pair of stable marked curves $(\Cc_1,\{\sigma_i\}_{i=1}^n)$ and $(\Cc_2,\{\tau_j\}_{j=1}^m)$ can be glued together to obtain a third such curve $(\Cc_1\cup_{\sigma_k\sim \tau_\ell}\Cc_2,\{\sigma_i,\tau_j\}_{i\neq k,j\neq\ell})$, for any choice of $k$ and $\ell$. Similarly, two points $\sigma_k$ and $\sigma_\ell$ on the same curve $(\Cc,\{\sigma_i\}_{i=1}^n)$ can be glued together to obtain a new curve $(\Cc/\sigma_k\sim\sigma_\ell,\{\sigma_i\}_{i\neq k,\ell})$. In this article, we construct analogous
gluings for $k$-log-canonically embedded curves. More conceptually, denote by $\oEc_{g,n}^k$ the moduli of $k$-log-canonically embedded stable curves of genus $g$ with $n$ marked points (see Definition \ref{def:loghilb}). For $k\ge 2$ (or $k\ge 1$ when $g_1=g_2=0$), we construct maps
\begin{equation}
    \oEc_{g_1,n_1+1}^k\times\oEc_{g_2,n_2+1}^k\to\oEc_{g_1+g_2,n_1+n_2}^k \label{eglue2curves}
\end{equation}
encoding the gluing of two embedded curves. For $k\ge 5$, we construct maps
\begin{equation}
    \oEc_{g,n+2}^k\to\oEc_{g+1,n}^k \label{eglue1curve}
\end{equation}
which encode gluing two points together on the same embedded curve. Our main result is now the following.
\begin{theorem}\label{thm:main}\mbox{}
    \begin{enumerate}
        \item For each $k\ge 5$, the maps \eqref{eglue2curves} and \eqref{eglue1curve} endow the collection $\{\oEc_{g,n}^k\}$ with the structure of a modular operad (in DM-stacks) which we denote $\oEc^k$.
        \item For $k\ge 1$, the maps \eqref{eglue2curves} endow the collection $\{\oEc_{0,n}^k\}$ with the structure of a cyclic operad (in schemes), which we denote $\oEc_{0,c}^k$.
        \item For each $k\ge 2$, the maps \eqref{eglue2curves} endow the collection $\{\oEc_{g,n}^k\}$ with the structure of a stable cyclic operad (in DM-stacks), which we denote $\oEc_c^k$.
    \end{enumerate}
    Further, the maps
    \begin{equation*}
        \oEc_{g,n}^k\to\oMc_{g,n}
    \end{equation*}
    given by forgetting the embedding determine maps
    \begin{enumerate}
        \item of modular operads (in DM-stacks)
            \begin{equation*}
                \oEc^k\to\oMc,
            \end{equation*}
        \item of cyclic operads (in schemes)
            \begin{equation*}
                \oEc_{0,c}^k\to\oMc_0,
            \end{equation*}
        \item and of stable cyclic operads (in DM-stacks)
            \begin{equation*}
                \oEc_c^k\to\oMc.
            \end{equation*}
    \end{enumerate}
\end{theorem}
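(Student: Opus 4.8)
All three parts have the same shape: the gluing maps \eqref{eglue2curves} and \eqref{eglue1curve} have been built in the body of the paper, so what remains is to check that they obey the axioms of the relevant flavour of operad, and that the forgetful maps respect those structures. The plan is to treat the modular case (1) in full; parts (2) and (3) are then the sub-statements of the same argument that never invoke \eqref{eglue1curve} (genus $0$ for (2), all $g$ for (3)). First I would pin down the symmetric group actions: $\Sigma_n$ acts on $\oEc^k_{g,n}$ by permuting the markings $\sigma_1,\dots,\sigma_n$, and since the divisor $\sum_i\sigma_i$ is manifestly permutation invariant, the bundle $\pi_\ast\omega_{\Cc/S}(\sum\sigma_i)^{\otimes k}$ and the whole datum of Definition \ref{def:loghilb} are canonically $\Sigma_n$-equivariant. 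Hence this is a well-defined action lifting the standard one on $\oMc_{g,n}$, and $\oEc^k_{g,n}\to\oMc_{g,n}$ is $\Sigma_n$-equivariant.

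Next come the operad relations. Recall (Getzler--Kapranov) that a modular operad is determined by the two elementary operations — grafting two corollas along an edge and grafting a corolla to itself, here exactly \eqref{eglue2curves} and \eqref{eglue1curve} — subject to the relations asserting that an iterated grafting depends only on the resulting stable graph: associativity and commutativity of the compositions, commutativity of the contractions, and compatibility of compositions with contractions, all $\Sigma$-equivariantly. Each such relation is an equality of two morphisms of DM-stacks obtained by iterating \eqref{eglue2curves} and \eqref{eglue1curve}, and I would verify it in two stages. (i) After composing with $\oEc^k_{g,n}\to\oMc_{g,n}$ the two morphisms become the two sides of the corresponding relation in the modular operad $\oMc$, which holds by the classical theory of stable curves. (ii) This then lifts to $\oEc^k$ because the gluing maps transport the embedding datum by a strictly functorial recipe: on $T$-points one glues the underlying curves, identifies the restriction of $\omega(\sum\sigma_i)^{\otimes k}$ of the output curve along each component with the log-canonical bundle used for the corresponding input (via adjunction at the node), and identifies its pushforward with the fibre product, over the fibre at the node, of the pushforwards for the inputs (via the normalization short exact sequence); the self-gluing \eqref{eglue1curve} is handled the same way, the pushforward for the output now being the kernel of the evaluation onto the fibre at the new node. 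Every ingredient here — gluing of nodal curves, adjunction, the normalization sequence, and the resulting fibre-product and direct-sum operations on pushforwards — is itself strictly associative, commutative and $\Sigma$-equivariant, so iterated applications agree on the nose; equivalently, once an embedding of the glued curve compatible with those of the pieces exists (as it does in the stated range of $k$), the normalization sequence shows it is unique, so the $\oMc$-relation has a unique lift and (i) forces (ii). I expect this promotion of identities from $\oMc$ to $\oEc^k$ to be the main obstacle, precisely because the forgetful map is not a monomorphism: an identity downstairs does not formally yield one upstairs, and one must genuinely track the rigidifying data through every composite, the heaviest bookkeeping being $\Sigma$-equivariance and the composition–contraction compatibility axiom.

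Finally, once $\{\oEc^k_{g,n}\}$ is a modular operad, the forgetful maps $\oEc^k_{g,n}\to\oMc_{g,n}$ assemble into a map of modular operads essentially by construction, since \eqref{eglue2curves} and \eqref{eglue1curve} were defined as lifts of the gluing and contraction maps of $\oMc$, so the relevant squares commute by definition. For (2) one restricts to $g=0$ and discards \eqref{eglue1curve}; as $\oEc^k_{0,n}$ and $\oMc_{0,n}$ are schemes, this yields a map of cyclic operads in schemes. For (3) one keeps all $g$ but again discards \eqref{eglue1curve}, giving a map of stable cyclic operads in DM-stacks. In each case the threshold on $k$ is inherited from the range in which the relevant gluing maps were constructed: $k\ge 5$ for \eqref{eglue1curve} (the self-glued curve has higher genus, and one needs its $k$-log-canonical bundle to be very ample with vanishing higher cohomology, uniformly in families), while \eqref{eglue2curves} already makes sense for $k\ge 2$ in general, and for $k\ge 1$ when $g_1=g_2=0$.
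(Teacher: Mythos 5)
Your proposal is correct and follows essentially the same route as the paper: equivariance is immediate from the permutation-invariance of the divisor $\sum_i\sigma_i$ in the construction of the gluing maps, and the associativity/compatibility axioms reduce to showing that the two orders of performing a pair of gluings yield canonically isomorphic embedded curves, which both you and the paper obtain by combining associativity of gluing of underlying nodal curves with the canonical identification of the pushed-forward pluri-log-canonical linear systems via the normalization exact sequences (the paper packages your ``uniqueness of the lifted embedding'' step as a commuting diagram of short exact sequences of pushforwards, which upon dualizing and projectivizing gives $\eta^{\gl_{12,34}}=\eta^{\gl_{34,12}}$). The deduction of the map of operads $\oEc^k\to\oMc$ and the restriction to the cyclic and stable cyclic cases are likewise handled as you describe.
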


We refer to the operads $\oEc_{0,c}^k$, $\oEc_c^k$ and $\oEc^k$ as the \emph{cyclic, stable cylic, and modular operads of $k$-log-canonically embedded curves}. These operads expand the small collection of examples of cyclic, stable cyclic and modular operads in schemes and DM-stacks. We were led to them by the analogy between $\oMc$ and the topological modular operad $\noMc_{top}$ of smooth, connected, oriented surfaces with boundary. Because the space of embeddings of a manifold $M$ in $\mathbb{R}^\infty$ is contractible, the operad $\noMc_{top}$ is equivalent to the operad (up to coherent homotopy) $\Ec^\infty_{top}$ of smooth, connected, oriented surfaces with boundary inside $\mathbb{R}^\infty$. This equivalence provides the starting point for many results on moduli of topological surfaces (e.g. Madsen and Weiss's proof of the Mumford conjecture \cite{MR2335797}). It is natural to ask whether one can similarly obtain information about the moduli of stable marked curves by studying moduli of embedded curves. We hope to pursue this in future work.

\subsection*{Acknowledgements}
We are very grateful to Kyoji Saito and Kavli IPMU for making this collaboration possible by hosting the third author during the 2013--2014 academic year. We thank the editors and the anonymous referees for helpful comments and suggestions.

\section{Preliminaries on Curves}
\begin{definition}[cf. \cite{MR702953}]
    Let $S$ be a scheme, and let $g$ and $n$ be non-negative integers such that $n\ge 3-2g$. A \emph{stable marked curve of genus $g$} over $S$, $(\Cc,\{\sigma_i\}_{i=1}^n)$, is a flat, projective morphism
    \begin{equation*}
        \pi\colon\Cc\to S,
    \end{equation*}
    of relative dimension 1, along with pairwise disjoint sections
    \begin{equation*}
        \sigma_i\colon S\to \Cc
    \end{equation*}
    for $i=1,\ldots,n$. We require that, for all geometric points $s$ of $S$,
    \begin{enumerate}
        \item the fibers $\Cc_s$ are reduced, connected curves with at most nodal singularities,
        \item the points $\sigma_i(s)$ lie in the smooth locus of $\Cc_s$ for all $i$,
        \item $h^1(\Cc_s,\Oc_{\Cc_s})=g$, and
        \item the normalization $\Cc_{a,s}^\nu$ of each irreducible component of $\Cc_s$ contains at least $3-2g_{a,s}$
            special
            points, where $g_{a,s}$ is the arithmetic genus of $\Cc_{a,s}^\nu$ and where a point is \emph{special} if it is
            either a point of the form $\sigma_i(s)$ or the pre-image of a node.
    \end{enumerate}
\end{definition}

Now let $\Cc\to S$ be a curve, i.e. a flat, projective morphism of relative dimension 1, not necessarily connected. We further
assume that for each geometric point $s$ of $S$, the fiber $\Cc_s$ has at most nodal singularities. Let $\sigma_1,\sigma_2\colon S\to \Cc$ be two disjoint sections such that for each geometric point $s$ of $S$, $\sigma_1(s)$ and $\sigma_2(s)$ lie in the
smooth locus of the fiber $\Cc_s$. Define $\Cc^{\gl}:=\Cc/\sigma_1\sim\sigma_2$, denote the quotient map by $\gl\colon\Cc\to\Cc^{\gl}$, and let $\sigma:=\gl\circ\sigma_1=\gl\circ\sigma_2$. Recall that for each line bundle $L$ on
$\Cc^{\gl}$ we have a short exact sequence
\begin{equation}
    0\to L \to \gl_*\gl^*L\to \sigma_\ast\sigma^\ast L\to 0.\label{normSES}
\end{equation}
Next, recall that the canonical line bundle $\omega_{\Cc/S}$ of a family of nodal curves, defined as $\det(\Omega^1_{\Cc/S})$,
admits the following description (cf. \cite[p.163]{MR702953}). Every section $\alpha$ of $\omega_{\Cc/S}$, when restricted to
the fiber $\Cc_s$ over a geometric point $s$ of $S$, is a rational 1-form $\alpha_s$ on the normalization of $\Cc_s$. Moreover, $\alpha_s$ has at most simple poles at the pre-images $\{p_{\pm,s}\}$ of the nodes $\{p_s\}$ and
\begin{equation*}
    \res_{p_{+,s}}\alpha_s+\res_{p_{-,s}}\alpha_s=0
\end{equation*}
for each node $p_s$ of the fiber $\Cc_s$. Along with Nakayama's Lemma, this implies that we have a canonical exact sequence of
$\Oc_{\Cc^{\gl}}$-modules
\begin{equation}
    0\to \omega_{\Cc^{\gl}/S}\to \gl_*\omega_{\Cc/S}(\sigma_1+\sigma_2)\to \sigma_\ast\Oc_S\to 0.\label{babycanSES}
\end{equation}
Choosing $L=\omega_{\Cc^{\gl}/S}$ and taking the obvious vertical maps from \eqref{normSES} to \eqref{babycanSES}, an
application of the 5-lemma tells us that $\gl_*\gl^*\omega_{\Cc^{\gl}/S}\cong\gl_*\omega_{\Cc/S}(\sigma_1+\sigma_2)$.

\begin{lemma}\label{lemma:canSES}
    In the situation above, let $D\subset\Cc^{\gl}$ be a divisor such that $D\to S$ is a flat map of degree $d$, and such that
    for each geometric point $s$ of $S$, the fiber $D_s$ is supported on the smooth locus of the fiber $\Cc^{\gl}_s$. Then for
    each $k\ge 1$, there is a short exact sequence
    \begin{equation*}
        0\to\omega_{\Cc^{\gl}/S}(D)^{\otimes k}\to\gl_*\omega_{\Cc/S}(D+\sigma_1+\sigma_2)^{\otimes k}\to\sigma_\ast\Oc_S\to 0.
    \end{equation*}
\end{lemma}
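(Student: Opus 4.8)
The plan is to obtain the asserted sequence as the instance of \eqref{normSES} with $L=\omega_{\Cc^{\gl}/S}(D)^{\otimes k}$, after identifying its middle and right-hand terms; everything will hinge on one isomorphism of line bundles on $\Cc$, and the rest is formal twisting. First I would record that $D$ is disjoint from the section $\sigma$: by hypothesis each fiber $D_s$ lies in the smooth locus of $\Cc^{\gl}_s$, whereas $\sigma(s)$ is a node, so $\gl$ restricts to an isomorphism over an open neighborhood of $D$. Writing $D$ also for the divisor $\gl^{-1}(D)\subset\Cc$, this gives $\gl^\ast\Oc_{\Cc^{\gl}}(D)\cong\Oc_\Cc(D)$ and $\sigma^\ast\Oc_{\Cc^{\gl}}(D)\cong\Oc_S$.

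The key step is to upgrade the isomorphism $\gl_\ast\gl^\ast\omega_{\Cc^{\gl}/S}\cong\gl_\ast\omega_{\Cc/S}(\sigma_1+\sigma_2)$ recorded above to an isomorphism of line bundles on $\Cc$ itself, namely $\gl^\ast\omega_{\Cc^{\gl}/S}\cong\omega_{\Cc/S}(\sigma_1+\sigma_2)$. This can be done by a direct local computation at the node --- $\omega_{\Cc^{\gl}/S}$ is generated near $\sigma$ by a relative $1$-form restricting to $dx/x$ on one branch and to $-dy/y$ on the other, and its pullback generates $\omega_{\Cc/S}(\sigma_i)$ on the $i$-th branch --- or, coordinate-free, as follows: the natural map $\phi\colon\gl^\ast\omega_{\Cc^{\gl}/S}\to\omega_{\Cc/S}(\sigma_1+\sigma_2)$ whose pushforward $\gl_\ast\phi$ is the isomorphism above is injective (the finite morphism $\gl$ has exact and faithful pushforward, and $\gl_\ast\phi$ is injective), and its cokernel is supported on $\sigma_1\cup\sigma_2$; since $\gl_\ast(\operatorname{coker}\phi)\cong\operatorname{coker}(\gl_\ast\phi)=0$ and $\gl$ is finite, $\operatorname{coker}\phi=0$, so $\phi$ is an isomorphism. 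I expect this to be the main obstacle: the displayed pushforward statement does not suffice on its own, because the argument needs the $k$-th tensor power of this identification and pushforward does not commute with tensor powers; and the direct local computation carries some sign bookkeeping once one raises $dx/x=-dy/y$ to the $k$-th power.

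Granting the line-bundle isomorphism, I would finish as follows. Taking $k$-th powers and twisting by $\Oc(D)$ gives
$\gl^\ast(\omega_{\Cc^{\gl}/S}(D)^{\otimes k})\cong(\gl^\ast\omega_{\Cc^{\gl}/S})^{\otimes k}\otimes\Oc_\Cc(D)^{\otimes k}\cong\omega_{\Cc/S}(D+\sigma_1+\sigma_2)^{\otimes k}$, hence $\gl_\ast\gl^\ast(\omega_{\Cc^{\gl}/S}(D)^{\otimes k})\cong\gl_\ast\omega_{\Cc/S}(D+\sigma_1+\sigma_2)^{\otimes k}$, which is the desired middle term. For the right-hand term, comparing \eqref{normSES} for $L=\omega_{\Cc^{\gl}/S}$ with \eqref{babycanSES} (same left-hand term via the identity, isomorphic middle terms) forces an isomorphism on cokernels $\sigma_\ast\sigma^\ast\omega_{\Cc^{\gl}/S}\cong\sigma_\ast\Oc_S$, and since $\sigma$ is a closed immersion this gives $\sigma^\ast\omega_{\Cc^{\gl}/S}\cong\Oc_S$; combined with $\sigma^\ast\Oc_{\Cc^{\gl}}(D)\cong\Oc_S$ it yields $\sigma^\ast(\omega_{\Cc^{\gl}/S}(D)^{\otimes k})\cong\Oc_S$. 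Substituting both identifications into \eqref{normSES} with $L=\omega_{\Cc^{\gl}/S}(D)^{\otimes k}$ then produces the claimed short exact sequence.

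Finally, for the later applications to the operad structure one should check that all of these identifications are natural in the data $(\Cc,\sigma_1,\sigma_2,D)$, but this is routine bookkeeping once the isomorphisms above are pinned down by their defining universal maps rather than merely asserted abstractly.
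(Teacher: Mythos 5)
Your proposal is correct and follows essentially the same route as the paper: plug $L=\omega_{\Cc^{\gl}/S}(D)^{\otimes k}$ into \eqref{normSES} and identify the middle term with $\gl_*\omega_{\Cc/S}(D+\sigma_1+\sigma_2)^{\otimes k}$. The only (minor) difference is that you establish the line-bundle isomorphism $\gl^\ast\omega_{\Cc^{\gl}/S}\cong\omega_{\Cc/S}(\sigma_1+\sigma_2)$ on $\Cc$ globally before taking $k$-th powers, whereas the paper achieves the same identification fiberwise by comparing sections over open sets $U_s$ containing both or neither preimage of the node and then invoking Nakayama; your remark that the pushforward isomorphism alone does not commute with tensor powers is exactly the point the paper's $U_s$-argument is designed to handle.
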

\begin{proof}
    If we take $L=\omega_{\Cc^{\gl}/S}(D)^{\otimes k}$, then \eqref{normSES} becomes
    \begin{equation*}
        0\to \omega_{\Cc^{\gl}/S}(D)^{\otimes k} \to \gl_*\gl^*\omega_{\Cc^{\gl}/S}(D)^{\otimes k}\to \sigma_\ast\Oc_S\to 0.
    \end{equation*}
    It remains to show that $\gl_*\gl^*\omega_{\Cc^{\gl}/S}(D)^{\otimes k}\cong\gl_*\omega_{\Cc/S}(D+\sigma_1+\sigma_2)^{\otimes  k}$.

    Using Nakayama's Lemma, it suffices to check that this isomorphism holds at each geometric point $s$ of $S$. Let
    $U_s\subset\Cc_s$ be an open set such that either both or neither of the points $\sigma_1(s)$ and $\sigma_2(s)$ are in
    $U_s$. So long as both $\gl^*\omega_{\Cc^{\gl}_s}(D_s)^{\otimes k}$ and $\omega_{\Cc_s}(D_s+\sigma_1(s)+\sigma_2(s))^{\otimes k}$ agree on every such $U_s$, the pushforwards will be isomorphic. By the above discussion, we see that
    \begin{equation*}
        \Gamma(U_s,\gl^*\omega_{\Cc^{\gl}_s}(D_s))\cong\Gamma(U_s,\omega_{\Cc_s}(D_s+\sigma_1(s)+\sigma_2(s)))
    \end{equation*}
    for each such $U_s$, and therefore
    \begin{equation*}
        \Gamma(U_s,\gl^*\omega_{\Cc^{\gl}_s}(D_s)^{\otimes
        k})\cong\Gamma(U_s,\omega_{\Cc_s}(D_s+\sigma_1(s)+\sigma_2(s))^{\otimes k})
    \end{equation*}
    as required.
\end{proof}

Now let $\Cc$ be a nodal curve over a field $\kappa$ and let $\nu\colon\Cc^\nu\to\Cc$ be its normalization. Recall that for any
line bundle $L$ on $\Cc$ we have a short exact sequence
\begin{equation}\label{normkSES}
    0\to L\to \nu_\ast\nu^\ast L\to\Oc_N\to 0
\end{equation}
analogous to \eqref{normSES}. By reasoning analogous to the proof of Lemma \ref{lemma:canSES}, we also have the following.
\begin{lemma}\label{lemma:normoverk}
    In the situation above, let $D$ be a divisor on $\Cc$. Denote by $N$ the divisor of nodes on $\Cc$, and denote by $P$ the
    divisor of pre-images of nodes in the normalization $\Cc^\nu$. Then for each $k\ge 1$, there is a short exact sequence of
    $\Oc_{\Cc}$-modules
    \begin{equation*}
        0\to\omega_{\Cc}(D)^{\otimes k}\to\nu_*\omega_{\Cc^\nu}(D+P)^{\otimes k}\to \Oc_N\to 0.
    \end{equation*}
\end{lemma}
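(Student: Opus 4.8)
The plan is to run the argument of Lemma~\ref{lemma:canSES} with the normalization sequence \eqref{normkSES} playing the role of \eqref{normSES}. First I would set $L=\omega_{\Cc}(D)^{\otimes k}$ in \eqref{normkSES}, which produces the exact sequence
\begin{equation*}
    0\to\omega_{\Cc}(D)^{\otimes k}\to\nu_\ast\nu^\ast\bigl(\omega_{\Cc}(D)^{\otimes k}\bigr)\to\Oc_N\to 0.
\end{equation*}
The cokernel here is again $\Oc_N$ because near a node $\omega_{\Cc}(D)^{\otimes k}$ is locally free of rank one, so the cokernel of $L\to\nu_\ast\nu^\ast L$ there agrees with the length-one sheaf one obtains when $L=\Oc_{\Cc}$. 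It then remains to identify the middle term with $\nu_\ast\bigl(\omega_{\Cc^\nu}(D+P)^{\otimes k}\bigr)$. Since $\nu$ is finite, hence affine, $\nu_\ast$ is exact, so it suffices to produce an isomorphism of line bundles on $\Cc^\nu$
\begin{equation*}
    \nu^\ast\bigl(\omega_{\Cc}(D)^{\otimes k}\bigr)\;\cong\;\omega_{\Cc^\nu}(\nu^\ast D+P)^{\otimes k},
\end{equation*}
and then push forward and substitute. Because pullback commutes with tensor powers and $\nu^\ast\Oc_{\Cc}(D)=\Oc_{\Cc^\nu}(\nu^\ast D)$, this reduces to the single assertion $\nu^\ast\omega_{\Cc}\cong\omega_{\Cc^\nu}(P)$.

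To prove that, I would argue locally on $\Cc^\nu$. Over the complement of the node-preimages $\nu$ is an isomorphism, $\omega_{\Cc}$ pulls back to $\omega_{\Cc^\nu}$, and $P$ is empty there, so nothing is needed. Near a point of $P$ I would invoke the residue description of $\omega_{\Cc}$ recalled before Lemma~\ref{lemma:canSES}: $\omega_{\Cc}$ is locally free of rank one at the node, generated by a section $\tau$ whose pullback to each of the two branches is $dt/t$ up to a unit, where $t$ is a local coordinate on that branch vanishing at the node-preimage. Since $dt$ generates $\omega_{\Cc^\nu}$ there, $dt/t$ generates $\omega_{\Cc^\nu}(P)$, so $\nu^\ast\tau$ is a local generator of $\omega_{\Cc^\nu}(P)$ on each branch; equivalently, the natural map $\nu^\ast\omega_{\Cc}\to\omega_{\Cc^\nu}(P)$ (adjoint to the inclusion $\omega_{\Cc}\hookrightarrow\nu_\ast\omega_{\Cc^\nu}(P)$ sending a differential on $\Cc$ to its restrictions to the branches) is an isomorphism on stalks. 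This furnishes the desired isomorphism globally and canonically, hence compatibly with the inclusions of $\omega_{\Cc}(D)^{\otimes k}$ into both sides, and the exact sequence of the Lemma drops out.

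The only real content here, and thus the (fairly modest) main obstacle, is the local computation at the nodes showing that $\nu^\ast\tau$ generates $\omega_{\Cc^\nu}(P)$ on each branch --- this is exactly the Rosenlicht/residue picture of the dualizing sheaf from the previous section, specialized from a family over $S$ to a single nodal curve over $\kappa$, and the rest is formal. The one bookkeeping point to keep straight is that, when $D$ meets a node, ``$D+P$'' on $\Cc^\nu$ must be read as $\nu^\ast D+P$; this is harmless, since $D$ is Cartier and the cokernel in \eqref{normkSES} depends only on the isomorphism class of the line bundle $L$.
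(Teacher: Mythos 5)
Your proposal is correct and follows essentially the same route the paper intends: the paper proves this lemma only by declaring it ``analogous to the proof of Lemma~\ref{lemma:canSES},'' i.e.\ substitute $L=\omega_{\Cc}(D)^{\otimes k}$ into \eqref{normkSES} and identify the middle term via the residue description of $\omega_{\Cc}$ at the nodes, which is exactly what you do (with the local $dt/t$ computation and the $\nu^\ast D$ bookkeeping spelled out more explicitly than the paper bothers to).
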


\begin{proposition}[Riemann--Roch]\label{prop:RR}
    Let $\kappa$ be a field, and let $\Cc$ be a curve of arithmetic genus $g$ over $\kappa$, with at most nodal singularities.
    Let $L$ be a line bundle on $\Cc$ of total degree $d$. Then we have
    \begin{equation*}
        h^0(\Cc,L)-h^1(\Cc,L)=d-g+1.
    \end{equation*}
\end{proposition}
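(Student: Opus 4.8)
The plan is to reduce the statement for nodal curves to the classical Riemann--Roch theorem for smooth projective curves by using the normalization sequence of Lemma~\ref{lemma:normoverk}, or rather its untwisted analogue \eqref{normkSES}. First I would reduce to the case where $\Cc$ is connected: if $\Cc=\coprod_a\Cc_a$ with $\Cc_a$ connected of arithmetic genus $g_a$, then $g=\sum_a g_a-(c-1)$ where $c$ is the number of components is \emph{not} quite right --- rather, for a disjoint union $h^i(\Cc,L)=\sum_a h^i(\Cc_a,L|_{\Cc_a})$, $d=\sum_a d_a$, and $1-g=\sum_a(1-g_a)$ since $h^1(\Cc,\Oc_\Cc)=\sum_a h^1(\Cc_a,\Oc_{\Cc_a})$ forces $g=\sum g_a$ only when we are careful; in any case $\chi(\Oc_\Cc)=\sum_a\chi(\Oc_{\Cc_a})$, so additivity of both sides over connected components handles this step immediately. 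So assume $\Cc$ is connected with $\delta$ nodes and normalization $\nu\colon\Cc^\nu\to\Cc$, and write $\Cc^\nu=\coprod_a\Cc_a^\nu$ with $\Cc_a^\nu$ smooth projective of genus $g_a$.

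Next I would run the long exact cohomology sequence attached to \eqref{normkSES}:
\begin{equation*}
    0\to L\to\nu_\ast\nu^\ast L\to\Oc_N\to 0,
\end{equation*}
where $N$ is the length-$\delta$ skyscraper sheaf supported at the nodes. Since $\nu$ is finite, $h^i(\Cc,\nu_\ast\nu^\ast L)=h^i(\Cc^\nu,\nu^\ast L)=\sum_a h^i(\Cc_a^\nu,\nu^\ast L|_{\Cc_a^\nu})$, and $h^0(\Cc,\Oc_N)=\delta$, $h^1(\Cc,\Oc_N)=0$. Taking Euler characteristics, which are additive on short exact sequences, gives
\begin{equation*}
    \chi(\Cc,L)=\chi(\Cc^\nu,\nu^\ast L)-\delta=\sum_a\bigl(d_a-g_a+1\bigr)-\delta,
\end{equation*}
using classical Riemann--Roch on each smooth component $\Cc_a^\nu$, where $d_a=\deg(\nu^\ast L|_{\Cc_a^\nu})$. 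Now $\sum_a d_a=d$ because $\nu^\ast$ preserves total degree, and so $\chi(\Cc,L)=d-\sum_a g_a+c-\delta$ where $c$ is the number of components of $\Cc^\nu$.

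It remains to identify $\sum_a g_a-c+\delta$ with $g-1$, equivalently $g=\sum_a g_a+\delta-c+1$. This is precisely the genus formula for a connected nodal curve, which itself follows from the same argument applied to $L=\Oc_\Cc$: there $d=0$, so $\chi(\Cc,\Oc_\Cc)=1-g$ by definition of $g$ (via $h^0(\Oc_\Cc)=1$ for connected $\Cc$ and $h^1(\Oc_\Cc)=g$), while the normalization computation gives $\chi(\Cc,\Oc_\Cc)=\sum_a(1-g_a)-\delta=c-\sum_a g_a-\delta$; equating the two yields $g=\sum_a g_a+\delta-c+1$. Substituting back, $\chi(\Cc,L)=d-g+1$, which is the claim. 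The only mild subtlety --- and the step I would be most careful about --- is the bookkeeping when $\Cc$ is disconnected or when its \emph{dual graph} has several components, since then the ``$+1$'' must be tracked per connected component; the cleanest route is to establish the connected case first exactly as above, deduce the genus formula as a special case, and then invoke additivity of $\chi$, of $d$, and of $1-g$ over connected components to finish in general.
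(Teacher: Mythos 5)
Your proof is correct and follows essentially the same route as the paper: take Euler characteristics of the normalization sequence \eqref{normkSES}, apply smooth Riemann--Roch componentwise, and invoke the genus formula $g=\sum_a g_a-(\ell-1)+j$. The only differences are presentational --- you derive that genus formula from the $L=\Oc_\Cc$ case rather than citing it, and you track the disconnected case explicitly via additivity of $\chi$.
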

\begin{proof}
    Let $\nu:\Cc^\nu\to\Cc$ be the normalization of $\Cc$ and let $N$ be the divisor of nodes in $\Cc$. The sequence
    \eqref{normkSES} gives a long exact sequence on cohomology
    \begin{equation*}
        0\to H^0(\Cc,L)\to H^0(\Cc^\nu,\nu^*L)\to H^0(N,\Oc_N)\to H^1(\Cc,L)\to H^1(\Cc^\nu,\nu^*L)\to 0.
    \end{equation*}
    Exactness then implies that
    \begin{equation*}
        h^0(\Cc,L)-h^0(\Cc^\nu,\nu^*L)+j-h^1(\Cc,L)+h^1(\Cc^\nu,\nu^*L)=0,
    \end{equation*}
    where $j$ is the length of $N$.  We can rearrange terms and apply the smooth Riemann--Roch theorem:
    \begin{eqnarray*}
        h^0(\Cc,L)-h^1(\Cc,L)&=&h^0(\Cc^\nu,\nu^*L)-h^1(\Cc^\nu,\nu^*L)-j\\
        &=&\sum_a d_a-\sum_a g_a+\ell-j
    \end{eqnarray*}
    where $\ell$ is the number of irreducible components $C^\nu_a$ of $\Cc^\nu$, $d_a$ is the degree of $L$ restricted to the
    component $\Cc^\nu_a$, and $g_a$ is the geometric genus of $\Cc^\nu_a$. Using that $d=\sum_a d_a$ and $g=\sum_a g_a
    -(\ell-1)+j$, we conclude the result.
\end{proof}

\begin{lemma}\label{lemma:dimH0}
    Let $(\Cc,\{\sigma_i\}_{i=1}^n)$ be a stable marked curve over a field $\kappa$. Then, for $k\ge 2$, we have
    \begin{equation*}
        h^0\left(\Cc,\omega_{\Cc}\left(\sum_{i=1}^n\sigma_i\right)^{\otimes k}\right)=(2k-1)(g-1)+kn.
    \end{equation*}
    When $\Cc$ has arithmetic genus 0, the same formula holds for $k\ge 1$.
\end{lemma}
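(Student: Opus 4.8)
The plan is to deduce the formula from Riemann--Roch (Proposition~\ref{prop:RR}) once we know the line bundle has vanishing $H^1$; the vanishing is the whole content. First I would record the relevant degrees: pulling back along the normalization $\nu\colon\Cc^\nu\to\Cc$, the sheaf $\nu^*\omega_\Cc$ restricts on each component $\Cc^\nu_a$ to $\omega_{\Cc^\nu_a}(P_a)$, where $P_a$ is the divisor of preimages of nodes lying on $\Cc^\nu_a$ (this is exactly the description of $\omega_\Cc$ recalled before Lemma~\ref{lemma:canSES}); summing over components shows $\omega_\Cc$ has total degree $2g-2$, so $\omega_\Cc(\sum_i\sigma_i)^{\otimes k}$ has total degree $d=k(2g-2+n)$. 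Proposition~\ref{prop:RR} then gives
\[
  h^0\left(\Cc,\omega_\Cc\left(\sum_{i=1}^n\sigma_i\right)^{\otimes k}\right)-h^1\left(\Cc,\omega_\Cc\left(\sum_{i=1}^n\sigma_i\right)^{\otimes k}\right)=k(2g-2+n)-g+1=(2k-1)(g-1)+kn,
\]
so it remains to prove that the $h^1$ term vanishes.

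Since $\Cc$ is nodal, hence Gorenstein with dualizing sheaf $\omega_\Cc$, Serre duality identifies $H^1(\Cc,\omega_\Cc(\sum_i\sigma_i)^{\otimes k})^\vee$ with $H^0(\Cc,M)$, where $M:=\omega_\Cc^{\otimes(1-k)}(-k\sum_i\sigma_i)$, and I would show $H^0(\Cc,M)=0$. Using the inclusion $M\hookrightarrow\nu_*\nu^*M$ from \eqref{normkSES}, it suffices to see $\nu^*M$ has no global sections, which can be checked one component at a time. Writing $n_a$ for the number of markings on $\Cc^\nu_a$, $\delta_a:=\deg P_a$, and $e_a:=2g_a-2+\delta_a+n_a$, the degree bookkeeping above gives
\[
  \deg\left(\nu^*M|_{\Cc^\nu_a}\right)=(1-k)(2g_a-2+\delta_a)-kn_a=-(k-1)e_a-n_a.
\]
Stability of $\Cc$ (condition~(4) in the definition of a stable marked curve) says $\Cc^\nu_a$ carries at least $3-2g_a$ special points, i.e.\ $\delta_a+n_a\ge 3-2g_a$, equivalently $e_a\ge 1$. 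Hence for $k\ge 2$ each of these degrees is at most $-(k-1)<0$, so $H^0(\Cc^\nu_a,\nu^*M|_{\Cc^\nu_a})=0$ for every $a$, whence $H^0(\Cc^\nu,\nu^*M)=0$ and therefore $H^0(\Cc,M)=0$. This proves the formula for $k\ge 2$ and every $g$, in particular for $g=0$, $k\ge 2$.

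For $\Cc$ of arithmetic genus $0$ and $k=1$ the degree above is merely $-n_a\le 0$, with equality exactly when the component $\Cc_a\cong\Pb^1$ carries no marking, so the component-wise bound no longer forces vanishing; here I would use connectedness. A section $s\in H^0(\Cc,M)$ with $M=\Oc_\Cc(-\sum_i\sigma_i)$ restricts to $0$ on every component carrying a marking (negative degree) and to a constant on every marking-free $\Pb^1$; since sections of $M$ are precisely the tuples of component-wise sections that agree at the nodes (again by \eqref{normkSES}), $s$ vanishes on a component as soon as it vanishes on one of its neighbours, and because the dual graph of a genus-$0$ stable curve is a connected tree this propagates to all of $\Cc$, giving $s=0$. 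I expect this genus-$0$, $k=1$ case to be the only delicate point; everything else is routine degree bookkeeping together with the smooth-curve and nodal-curve forms of Serre duality.
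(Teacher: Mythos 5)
Your proof is correct, and it follows a genuinely different route from the paper's. The paper disposes of $H^1$ in one line and then puts all the work into computing $h^0$ directly: for irreducible $\Cc$ it applies Riemann--Roch on the spot, and for reducible $\Cc$ it pushes everything to the normalization via Lemma \ref{lemma:normoverk}, sums the smooth Riemann--Roch formula over the components $\Cc^\nu_a$, and subtracts the $j$-dimensional correction coming from the nodes. You instead apply the nodal Riemann--Roch (Proposition \ref{prop:RR}) once on $\Cc$ itself to get $\chi = (2k-1)(g-1)+kn$ immediately, and concentrate all the effort on the vanishing of $h^1$, via Serre duality on the (Gorenstein) nodal curve, the inclusion $M\hookrightarrow\nu_\ast\nu^\ast M$ from \eqref{normkSES}, and a component-wise degree count; your identity $\deg(\nu^\ast M|_{\Cc^\nu_a})=-(k-1)e_a-n_a$ with $e_a\ge 1$ from stability checks out. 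This trades the paper's combinatorial bookkeeping for an appeal to Serre duality on nodal curves, which the paper itself uses later (proof of Lemma \ref{lemma:trisec}), so nothing foreign is imported. One genuine advantage of your version: in the $g=0$, $k=1$ case the paper asserts that $\omega_{\Cc}\otimes\omega_{\Cc}\left(\sum_i\sigma_i\right)^{\otimes -k}=\Oc_{\Cc}\left(-\sum_i\sigma_i\right)$ has negative degree on each component, which is false on an unmarked component (the degree there is $0$); the vanishing of $H^1$ still holds, but it requires exactly the connectedness/tree-propagation argument you supply, so your write-up closes a small gap in the published proof rather than opening one.
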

\begin{proof}
    Stability and $k\geq 2$ (or $k\ge 1$ for genus 0) imply that $\omega_{\Cc}\otimes\left(\omega_{\Cc}\left(\sum_{i=1}^n\sigma_i\right)^{\otimes
    -k}\right)$ has negative degree on each component of $\Cc$, and thus
    \begin{equation*}
        H^1\left(\Cc,\omega_{\Cc}\left(\sum_{i=1}^n\sigma_i\right)^{\otimes k}\right)=0.
    \end{equation*}
    If $\Cc$ is smooth (or even just irreducible), then by Riemann--Roch, we have
    \begin{align*}
        h^0\left(\Cc,\omega_{\Cc}\left(\sum_{i=1}^n\sigma_i\right)^{\otimes k}\right)&=k(2g-2+n)-g+1\\
        &=(2k-1)(g-1)+nk.
    \end{align*}
    For non-smooth $\Cc$, let $N$ be the divisor of nodes of $\Cc$ and let $j$ be the length of $N$. Let $\nu:\Cc^\nu\to\Cc$ be
    a normalization of $\Cc$, and let $P$ be the divisor of pre-images of the nodes. Because
    $H^1\left(\Cc,\omega_{\Cc}\left(\sum_{i=1}^n\sigma_i\right)^{\otimes k}\right)=0$ (as we showed above), Lemma
    \ref{lemma:normoverk} shows that we have a short exact sequence
    \begin{equation}\label{degses}
        \begin{xy}
            \morphism<750,0>[0`H^0\left(\Cc,\omega_{\Cc}\left(\sum_{i=1}^n\sigma_i\right)^{\otimes k}\right);]
            \morphism(750,0)<1350,0>[H^0\left(\Cc,\omega_{\Cc}\left(\sum_{i=1}^n\sigma_i\right)^{\otimes
            k}\right)`H^0\left(\Cc^\nu,\omega_{\Cc^\nu}\left(\sum_{i=1}^n\sigma_i+P\right)^{\otimes k}\right);]
            \morphism(2100,0)<900,0>[H^0\left(\Cc^\nu,\omega_{\Cc^\nu}\left(\sum_{i=1}^n\sigma_i+P\right)^{\otimes
            k}\right)`\kappa^j;]
            \morphism(3000,0)<300,0>[\kappa^j`0;]
        \end{xy}.
    \end{equation}
    Write $\Cc^\nu$ as a union of its irreducible components $\Cc^\nu=\bigcup_{a=1}^\ell \Cc^\nu_a$. Denote by
    $\{\sigma_{(a,i)}\}$ the set of marked points on the component $\Cc^\nu_a$, and define $n_a:=|\{\sigma_{(a,i)}\}|$. Denote
    by $g_a$ the geometric genus of $\Cc^\nu_a$.  Denote by $P_a$ the restriction of $P$ to $\Cc^\nu_a$, and define
    $p_a:=\deg(P_a)$. Then:
    \begin{eqnarray*}
        h^0\left(\Cc,\omega_{\Cc}\left(\sum_{i=1}^n\sigma_i\right)^{\otimes
        k}\right)&=&h^0\left(\Cc^\nu,\omega_{\Cc^\nu}\left(\sum_{i=1}^n\sigma_i+P\right)^{\otimes k}\right)-j,\\
        h^0\left(\Cc^\nu,\omega_{\Cc^\nu}\left(\sum_{i=1}^n\sigma_i+P\right)^{\otimes k}\right)&=&\sum_{a=1}^\ell
        h^0\left(\Cc^\nu_a,\omega_{\Cc^\nu_a}\left(\sum_{(a,i)}\sigma_{(a,i)}+P_a\right)^{\otimes k}\right),
    \end{eqnarray*}
    and
    \begin{align*}
        h^0\left(\Cc^\nu_a,\omega_{\Cc^\nu_a}\left(\sum_{(a,i)}\sigma_{(a,i)}+P_a\right)^{\otimes
        k}\right)&=\deg\left(\omega_{\Cc^\nu_a}\left(\sum_{(a,i)}\sigma_{(a,i)}+P_a\right)^{\otimes k}\right)-g_a+1\\
        &=k(2g_a-2+n_a+p_a)-g_a+1\\
        &=(2k-1)(g_a-1)+k(n_a+p_a).
    \end{align*}
    Substituting back, we get
    \begin{align*}
        h^0\left(\Cc^\nu,\omega_{\Cc^\nu}\left(\sum_{i=1}^n\sigma_i+P\right)^{\otimes k}\right)&=\sum_{a=1}^\ell
        h^0\left(\Cc^\nu_a,\omega_{\Cc^\nu_a}\left(\sum_{(a,i)}\sigma_{(a,i)}+P_a\right)^{\otimes k}\right)\\
        &=\sum_{a=1}^\ell ((2k-1)(g_a-1)+k(n_a+p_a))\\
        &=(2k-1)\sum_{a=1}^\ell (g_a-1)+k\sum_{a=1}^\ell (n_a+p_a)\\
        &=(2k-1)\left(\sum_{a=1}^\ell g_a-\ell\right)+k(n+2j).
    \end{align*}
    Using that $g=\sum_{a=1}^\ell g_a-(\ell-1)+j$, we have
    \begin{equation*}
        h^0\left(\Cc^\nu,\omega_{\Cc^\nu}\left(\sum_{i=1}^n\sigma_i+P\right)^{\otimes k}\right)=(2k-1)(g-1)+j+kn.
    \end{equation*}
    In light of the exact sequence \eqref{degses}, this implies the result.
\end{proof}

\section{Gluing Embedded Curves}\label{sec:glue}
\begin{definition}\label{def:markedembed}
    Let $S$ be a scheme. A \emph{marked, $k$-log canonically embedded curve over $S$} consists of the data
    $(\Cc,\{\sigma_i\}_{i=1}^n,\eta)$, where
    \begin{enumerate}
        \item $\pi\colon\Cc\to S$ is a flat, projective morphism of relative dimension 1,
        \item the pair $(\Cc,\{\sigma_i\})$ is a disjoint union of stable marked curves over $S$,
        \item $\eta$ denotes a projective embedding over $S$ by a \emph{complete} linear system
            \begin{equation*}
                \eta\colon\Cc\to\Pb_S\left(\pi_\ast\omega_{\Cc/S}\left(\sum_{i=1}^n\sigma_i\right)^{\otimes k}\right)^\vee.
            \end{equation*}
    \end{enumerate}
\end{definition}

Our goal in this section is to prove the following.
\begin{theorem}[Gluing Embedded Curves]\label{thm:glue}
    Let $S$ be a scheme. Let $(\Cc,\{\sigma_i\}_{i=1}^n,\eta)$ be a marked, $k$-log-canonically embedded
    curve over $S$. Denote by $\ell_{\sigma_1,\sigma_2}$ the line in $\Pb_S\left(\pi_\ast\omega_{\Cc/S}\left(\sum_{i=1}^n\sigma_i\right)^{\otimes k}\right)^\vee$ spanned by $\sigma_1$ and
    $\sigma_2$. If $k\ge 5$, then:
    \begin{enumerate}
        \item \label{gluesec} There exists a section
            \begin{equation*}
                \gamma\colon S\to \ell_{\sigma_1,\sigma_2}
            \end{equation*}
            depending functorially in $S$.
        \item  \label{glueemb} The projection from $\gamma$ gives an embedding
            \begin{equation*}
                \Cc^{\gl}:=\Cc/\sigma_1\sim \sigma_2\to^{\eta^{\gl}} \Pb_S\left(\pi_\ast\omega_{\Cc^{\gl}/S}\left(\sum_{i=3}^n\sigma_i\right)^{\otimes k}\right)^\vee.
            \end{equation*}
    \end{enumerate}
    If $\sigma_1$ and $\sigma_2$ live on different connected components of $\Cc$, then the claims hold for $k\ge 2$. If, in addition, all components of $\Cc$ have arithmetic genus 0, then the claims hold for $k\ge 1$.
\end{theorem}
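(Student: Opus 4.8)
The plan is to produce $\gamma$ from the corank-one sub-bundle of $\pi_\ast\omega_{\Cc/S}\bigl(\sum_i\sigma_i\bigr)^{\otimes k}$ obtained by pushing the sequence of Lemma~\ref{lemma:canSES} forward to $S$, and then to identify the linear projection with center $\gamma$ with the morphism attached to the complete $k$-log-canonical linear system on $\Cc^{\gl}$. Write $\Vc:=\pi_\ast\omega_{\Cc/S}\bigl(\sum_{i=1}^n\sigma_i\bigr)^{\otimes k}$. First I would record that $\Vc$ is locally free and its formation commutes with base change: by Lemma~\ref{lemma:dimH0} the fibrewise $h^0$ is constant on connected components of $S$ and the corresponding $H^1$ vanishes in the stated range of $k$, so cohomology-and-base-change applies. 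Over a geometric point $s$, the complete-system embedding $\eta$ sends $\sigma_i(s)$ to the hyperplane $H_{i,s}\subset\Vc_s$ of sections vanishing there, and $\ell_{\sigma_1,\sigma_2}(s)$ is the pencil of hyperplanes of $\Vc_s$ containing $H_{1,s}\cap H_{2,s}$.

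To prove part~\ref{gluesec}, apply Lemma~\ref{lemma:canSES} with $D:=\sum_{i=3}^n\sigma_i$ and push the resulting sequence forward along the structure map $\pi^{\gl}\colon\Cc^{\gl}\to S$. Since $\pi^{\gl}\circ\gl=\pi$ and $D+\sigma_1+\sigma_2=\sum_{i=1}^n\sigma_i$, the middle term pushes to $\Vc$ and the quotient to $\Oc_S$; and $R^1\pi^{\gl}_\ast\omega_{\Cc^{\gl}/S}(D)^{\otimes k}=0$ because $\Cc^{\gl}$ is again fibrewise a disjoint union of stable marked curves (gluing preserves stability), so the $H^1$-vanishing in the proof of Lemma~\ref{lemma:dimH0} applies componentwise in the stated range of $k$ (the value $k\ge 1$ being needed only when every component of $\Cc^{\gl}_s$ is rational). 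Writing $\Vc^{\gl}:=\pi^{\gl}_\ast\omega_{\Cc^{\gl}/S}(D)^{\otimes k}$, this gives a short exact sequence of vector bundles, compatible with base change,
\begin{equation*}
    0\to\Vc^{\gl}\to\Vc\to\Oc_S\to 0 .
\end{equation*}
Dualizing produces a sub-line-bundle $\Oc_S\hookrightarrow\Vc^\vee$, hence a section $\gamma\colon S\to\Pb_S(\Vc)^\vee$, plainly functorial in $S$. That $\gamma$ lands in $\ell_{\sigma_1,\sigma_2}$ is the fibrewise statement $H_{1,s}\cap H_{2,s}\subseteq\Vc^{\gl}_s$: a section of $\omega_{\Cc_s}(\sum\sigma_i)^{\otimes k}$ vanishing at both $\sigma_1(s)$ and $\sigma_2(s)$ has vanishing ``leading coefficient'' at each of the two points, hence a fortiori satisfies the residue-matching condition which, by the description of $\omega$ recalled before Lemma~\ref{lemma:canSES}, cuts out the image of $\Vc^{\gl}_s$ in $\Vc_s$.

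To prove part~\ref{glueemb}, note that $\Vc^{\gl}\hookrightarrow\Vc$ determines a linear projection $\Pb_S(\Vc)^\vee\dashrightarrow\Pb_S(\Vc^{\gl})^\vee$ with center exactly $\gamma$. One first shows $\gamma$ is disjoint from $\eta(\Cc)$: fibrewise this asks that the functional $\Vc_s\to\kappa$ of the sequence above — which by the explicit form of \eqref{normSES} is a combination of the two leading-coefficient functionals at $\sigma_1(s)$ and $\sigma_2(s)$ with both coefficients nonzero — not be proportional to evaluation at any point of $\Cc_s$, a local computation near $\sigma_1(s)$ and $\sigma_2(s)$ using $H_{1,s}\neq H_{2,s}$ together with the point-separation of the complete $k$-log-canonical system on $\Cc_s$. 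Granting this, $\eta$ composed with the projection is a morphism $\Cc\to\Pb_S(\Vc^{\gl})^\vee$; on each fibre it is the morphism of the linear system $\Vc^{\gl}_s$ pulled back along $\gl$, and $\Vc^{\gl}_s=H^0(\Cc^{\gl}_s,\omega_{\Cc^{\gl}_s}(D_s)^{\otimes k})$ is complete by the sequence above and the $H^1$-vanishing. Because the residue-matching condition defining $\Vc^{\gl}_s$ is symmetric in the two branches at the would-be node, this system does not separate $\sigma_1(s)$ from $\sigma_2(s)$, so the morphism factors through $\gl\colon\Cc\to\Cc^{\gl}$ and defines $\eta^{\gl}$; and $\eta^{\gl}$ is a closed embedding because it is one on each geometric fibre, i.e. because $\omega_{\Cc^{\gl}_s}(D_s)^{\otimes k}$ is very ample on the stable marked curve $\Cc^{\gl}_s$ for every $s$.

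I expect the main obstacle to be this last point: the base-point-freeness and very ampleness of the complete $k$-log-canonical system on an arbitrary stable marked curve, precisely in the ranges $k\ge 5$, $k\ge 2$ for distinct components, and $k\ge 1$ for distinct components all of arithmetic genus $0$. This is a classical but delicate fact whose verification requires a case analysis over the boundary combinatorial types of $\Cc^{\gl}_s$, governed by its components of smallest pluricanonical degree (the rational components meeting the rest of the curve in the fewest special points); it is what dictates the numerical hypotheses, the self-gluing case being the most constrained because there the arithmetic genus of $\Cc^{\gl}$ increases while the pluricanonical degrees of the relevant components do not, leaving fewer global sections. The remaining input, the non-incidence of $\gamma$ with $\eta(\Cc)$, is local and routine.
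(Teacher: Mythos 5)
Your construction of $\gamma$ in part (1) is essentially the paper's: push the sequence of Lemma~\ref{lemma:canSES} forward along $\pi^{\gl}$, kill the $R^1$ terms by degree considerations, dualize the resulting corank-one subbundle $\Vc^{\gl}\subset\Vc$ to get the section, and check fibrewise that sections vanishing at both $\sigma_1$ and $\sigma_2$ lie in $\Vc^{\gl}_s$, so that $\gamma$ lands on $\ell_{\sigma_1,\sigma_2}$. That half is fine, and your identification of $\eta^{\gl}$ with the morphism of the complete linear system $|\omega_{\Cc^{\gl}_s}(D_s)^{\otimes k}|$ is a correct and clean reduction of part (2).

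The gap is in what you reduce to. You defer the entire content of part (2) to the assertion that $\omega_{\Cc^{\gl}_s}(D_s)^{\otimes k}$ is very ample on the abstract stable marked curve $\Cc^{\gl}_s$ in the stated ranges of $k$, flagging it as the expected main obstacle; but that assertion, as you have formulated it, is false in the low ranges. Take $\Cc=\Cc_1\sqcup\Cc_2$ with each $\Cc_a$ of genus $1$ carrying the single marked point $\sigma_a$, and $k=2$: then $\Cc^{\gl}$ is a genus-$2$ stable curve with no marked points and a separating node, $\omega_{\Cc^{\gl}}^{\otimes 2}$ restricts to a degree-$2$ line bundle on each elliptic component and so is not very ample. (The theorem survives because $\omega_{\Cc_a}(\sigma_a)^{\otimes 2}$ has only two sections, so no embedding $\eta$ of $\Cc$ exists and the hypotheses are vacuous.) The moral is that for $k=1,2$ the conclusion cannot be a property of $\Cc^{\gl}_s$ alone; you must use the hypothesis that $\Cc$ is already embedded by its complete system. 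That is exactly what the paper does: it never invokes very ampleness downstairs, but derives the embedding of $\Cc^{\gl}$ from the projective geometry of the embedded curve $\Cc$ --- for $k\ge 5$ by showing $\Cc$ has no trisecant lines and no quadrisecant planes (Lemmas~\ref{lemma:trisec} and~\ref{lemma:quadrisec}, a uniform Riemann--Roch degree count on the components of the normalization, so no case analysis over boundary types is needed), and in the two-component case by observing that the $\Cc_a$ span complementary linear subspaces, whence $\ell_{\sigma_1,\sigma_2}$ is the unique secant through $\gamma$ and no tangent line meets it (Lemma~\ref{lemma:disjointglue}). Your route can be salvaged for the modular range $k\ge 5$, where very ampleness of $\omega(\sum_i\sigma_i)^{\otimes k}$ for $k\ge 3$ on stable marked curves is classical, but for the cyclic-operad ranges $k\ge 1,2$ you need an argument that exploits the embeddedness of $\Cc$ itself.
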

\begin{remark}
    We choose $\sigma_1$ and $\sigma_2$ for notational convenience. Our proof applies equally well to any choice of $i$ and $j$.
\end{remark}

\begin{proof}
    To prove the theorem, we need to establish the claims \ref{gluesec} and \ref{glueemb}.

    \subsubsection*{1. Constructing the Section}
    \begin{lemma}[Claim \ref{gluesec}]\label{lemma:gluesec}
        Let $S$ be a scheme, let $k\ge 2$, and let $(\Cc,\{\sigma_i\}_{i=1}^n,\eta)$ be a marked, $k$-log-canonically embedded curve
        over $S$. Denote by $\ell_{\sigma_1,\sigma_2}$ the line in $\Pb_S\left(\pi_\ast\omega_{\Cc/S}\left(\sum_{i=1}^n\sigma_i\right)^{\otimes k}\right)^\vee$ spanned by $\sigma_1$ and $\sigma_2$. Then there exists a section
        \begin{equation*}
            \gamma\colon S\to \ell_{\sigma_1,\sigma_2},
        \end{equation*}
        depending functorially in $S$.

        If all irreducible components of $\Cc$ have arithmetic genus 0, and if $\sigma_1$ and $\sigma_2$ lie on different connected components, then we can take $k\ge 1$.
    \end{lemma}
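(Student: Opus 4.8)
The plan is to produce $\gamma$ from the gluing $\sigma_1\sim\sigma_2$ carried out at the level of dualizing sheaves, exactly the situation set up by Lemma~\ref{lemma:canSES}. Write $\Lc:=\omega_{\Cc/S}\big(\sum_{i=1}^n\sigma_i\big)^{\otimes k}$, let $\gl\colon\Cc\to\Cc^{\gl}:=\Cc/\sigma_1\sim\sigma_2$ be the quotient, and put $\sigma:=\gl\circ\sigma_1=\gl\circ\sigma_2$ and $D:=\sum_{i=3}^n\sigma_i$, so that $\sum_{i=1}^n\sigma_i=D+\sigma_1+\sigma_2$ and $\Lc=\omega_{\Cc/S}(D+\sigma_1+\sigma_2)^{\otimes k}$. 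First I would observe that $\Cc^{\gl}\to S$ is again a disjoint union of stable marked curves — gluing two marked points of a stable curve is stable — with all components of arithmetic genus $0$ in the case that $\sigma_1,\sigma_2$ lie on distinct genus-$0$ components; and that $D$ is a relative Cartier divisor supported in the relative smooth locus. Hence Lemma~\ref{lemma:canSES} applies, and the computation in the proof of Lemma~\ref{lemma:dimH0} (applied componentwise to $\Cc$ and to $\Cc^{\gl}$) gives $R^1\pi_\ast\Lc=0$ and $R^1\pi^{\gl}_\ast\,\omega_{\Cc^{\gl}/S}(D)^{\otimes k}=0$, for $k\ge2$ in general and for $k\ge1$ in the genus-$0$ case.

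Next I would push the short exact sequence of Lemma~\ref{lemma:canSES} forward along $\pi^{\gl}$. Using $\pi^{\gl}\circ\gl=\pi$ and $\pi^{\gl}\circ\sigma=\mathrm{id}_S$, together with the vanishing just recorded and the constancy of $h^0$ in the fibers, this yields a short exact sequence of vector bundles on $S$
\begin{equation*}
0\to\pi^{\gl}_\ast\,\omega_{\Cc^{\gl}/S}(D)^{\otimes k}\to\pi_\ast\Lc\xrightarrow{\;q\;}\Oc_S\to0,
\end{equation*}
whose formation commutes with base change in $S$ by cohomology and base change. Thus $q$ is a rank-$1$ quotient of $\pi_\ast\Lc$ — the same sort of datum that determines the sections $\eta\circ\sigma_i$ — and I would take $\gamma\colon S\to\Pb_S(\pi_\ast\Lc)^\vee$ to be the corresponding section. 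Functoriality in $S$ is immediate, since Lemma~\ref{lemma:canSES}, pushforward along $\pi^{\gl}$, and dualization all commute with base change.

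It then remains to see that $\gamma$ lies on $\ell_{\sigma_1,\sigma_2}$. Since $\eta$ is an embedding, $\Lc$ is relatively very ample, so the evaluation map $e=(\sigma_1^\ast,\sigma_2^\ast)\colon\pi_\ast\Lc\to\sigma_1^\ast\Lc\oplus\sigma_2^\ast\Lc$ is a surjection of vector bundles: fiberwise it is the evaluation $H^0(\Cc_s,\Lc_s)\to\Lc_s|_{\sigma_1(s)}\oplus\Lc_s|_{\sigma_2(s)}$, which is onto because $\eta_s$ is a closed immersion and $\sigma_1(s)\ne\sigma_2(s)$. The line $\ell_{\sigma_1,\sigma_2}$ is precisely the sub-$\Pb^1$-bundle determined by the rank-$2$ quotient $e$, while $\eta\circ\sigma_i$ is determined by the rank-$1$ quotient $\sigma_i^\ast$; hence $\gamma$ lies on $\ell_{\sigma_1,\sigma_2}$ if and only if $q$ factors through $e$, and since $e$ is surjective with $\ker e=\pi_\ast\big(\Lc(-\sigma_1-\sigma_2)\big)$, this is equivalent to $\ker e\subseteq\ker q=\pi^{\gl}_\ast\,\omega_{\Cc^{\gl}/S}(D)^{\otimes k}$. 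To see the inclusion, note that the subsheaf inclusion $\Lc(-\sigma_1-\sigma_2)\hookrightarrow\Lc$, pushed forward by $\gl$ and composed with the quotient $\gl_\ast\Lc\to\sigma_\ast\Oc_S$ of Lemma~\ref{lemma:canSES}, is zero: at the node $\sigma$ that quotient records the difference of the values at the two preimages $\sigma_1,\sigma_2$, and a section of $\Lc$ vanishing along both has vanishing such difference. Hence $\gl_\ast(\Lc(-\sigma_1-\sigma_2))$ lands in $\ker(\gl_\ast\Lc\to\sigma_\ast\Oc_S)=\omega_{\Cc^{\gl}/S}(D)^{\otimes k}$, and applying $\pi^{\gl}_\ast$ gives $\ker e\subseteq\ker q$, as needed.

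The step I expect to cost the most effort is the passage in the second paragraph from Lemma~\ref{lemma:canSES} to an honest short exact sequence of vector bundles with base-change-stable formation: one must confirm that $\Cc^{\gl}\to S$ really is a family of (disjoint unions of) stable marked curves, that the relevant higher direct images vanish in the stated range of $k$ — this is exactly where $k\ge2$, respectively $k\ge1$ in the genus-$0$ case, is used — and that the middle term is locally free and is identified inside $\pi_\ast\Lc$ by the ``descent along $\gl$'' description employed in the third paragraph. Everything downstream of that is formal.
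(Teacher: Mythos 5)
Your proposal is correct and follows essentially the same route as the paper: apply Lemma~\ref{lemma:canSES}, push forward along $\pi^{\gl}$, use the degree/Riemann--Roch vanishing of $R^1$ (this is exactly where $k\ge 2$, resp.\ $k\ge 1$ in the genus-$0$ case, enters) to get the short exact sequence $0\to\pi^{\gl}_\ast\omega_{\Cc^{\gl}/S}(D)^{\otimes k}\to\pi_\ast\Lc\to\Oc_S\to 0$, and take $\gamma$ to be the resulting rank-one quotient. The only (cosmetic) difference is in the last step: the paper deduces $\gamma\in\ell_{\sigma_1,\sigma_2}$ by noting that the projection from $\gamma$ identifies $\sigma_1$ with $\sigma_2$ since the subspace of sections agrees at those points, whereas you phrase the same fact as the kernel containment $\pi_\ast(\Lc(-\sigma_1-\sigma_2))\subseteq\ker q$, which is a slightly more explicit version of the identical observation.
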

    \begin{proof}
        Define $\Cc^{\gl}:=\Cc/\sigma_1\sim\sigma_2$.  Denote the quotient map by $\gl\colon\Cc\to\Cc^{\gl}$, and let
        $\sigma:=\gl\circ\sigma_1=\gl\circ\sigma_2$. By Lemma \ref{lemma:canSES}, we have a short exact sequence
        \begin{equation*}
            \begin{xy}
                \morphism<700,0>[0`\omega_{\Cc^{\gl}/S}\left(\sum_{i=3}^n\sigma_i\right)^{\otimes k};]
                \morphism(700,0)<1200,0>[\omega_{\Cc^{\gl}/S}\left(\sum_{i=3}^n\sigma_i\right)^{\otimes
                k}`\gl_*\omega_{\Cc/S}\left(\sum_{i=1}^n\sigma_i\right)^{\otimes k};]
                \morphism(1900,0)<850,0>[\gl_*\omega_{\Cc/S}\left(\sum_{i=1}^n\sigma_i\right)^{\otimes k}`\sigma_\ast\Oc_S;]
                \morphism(2750,0)<450,0>[\sigma_\ast\Oc_S`0;]
            \end{xy}.
        \end{equation*}
        Pushing this sequence forward to $S$ along the projection $\pi^{\gl}\colon\Cc^{\gl}\to S$, we obtain a long exact
        sequence
        \begin{align*}
            \begin{xy}
                \morphism(0,300)<700,0>[0`\pi^{\gl}_*\omega_{\Cc^{\gl}/S}\left(\sum_{i=3}^n\sigma_i\right)^{\otimes k};]
                \morphism(700,300)<1250,0>[\pi^{\gl}_*\omega_{\Cc^{\gl}/S}\left(\sum_{i=3}^n\sigma_i\right)^{\otimes
                k}`\pi^{\gl}_*\gl_\ast\omega_{\Cc/S}\left(\sum_{i=1}^n\sigma_i\right)^{\otimes k};]
                \morphism(1950,300)<1000,0>[\pi^{\gl}_*\gl_\ast\omega_{\Cc/S}\left(\sum_{i=1}^n\sigma_i\right)^{\otimes
                k}`\pi^{\gl}_*\sigma_\ast\Oc_S;]
                \morphism(500,0)<750,0>[`R^1\pi^{\gl}_*\omega_{\Cc^{\gl}/S}\left(\sum_{i=3}^n\sigma_i\right)^{\otimes k};]
                \morphism(1250,0)<1350,0>[R^1\pi^{\gl}_*\omega_{\Cc^{\gl}/S}\left(\sum_{i=3}^n\sigma_i\right)^{\otimes
                k}`R^1\pi^{\gl}_*\gl_*\omega_{\Cc/S}\left(\sum_{i=1}^n\sigma_i\right)^{\otimes k};]
                \morphism(2600,0)<750,0>[R^1\pi^{\gl}_*\gl_*\omega_{\Cc/S}\left(\sum_{i=1}^n\sigma_i\right)^{\otimes k}`0;]
            \end{xy}.
        \end{align*}
        Because $k\ge 2$ (or $k\ge 1$ if $\Cc$ and $\Cc^{\gl}$ have arithmetic genus 0), degree considerations combine with Grothendieck--Riemann--Roch to show that the higher direct image
        sheaves vanish. Because $\pi^{\gl}\sigma=1_S$ and $\pi^{\gl}\gl=\pi$, we can rewrite the cohomology long exact sequence
        as the short exact sequence
        \begin{equation*}
            \begin{xy}
                \morphism<700,0>[0`\pi^{\gl}_*\omega_{\Cc^{\gl}/S}\left(\sum_{i=3}^n\sigma_i\right)^{\otimes k};]
                \morphism(700,0)<1200,0>[\pi^{\gl}_*\omega_{\Cc^{\gl}/S}\left(\sum_{i=3}^n\sigma_i\right)^{\otimes k}`
                \pi_\ast\omega_{\Cc/S}\left(\sum_{i=1}^n\sigma_i\right)^{\otimes k};]
                \morphism(1900,0)<750,0>[\pi_\ast\omega_{\Cc/S}\left(\sum_{i=1}^n\sigma_i\right)^{\otimes k}`\Oc_S;]
                \morphism(2650,0)<400,0>[\Oc_S`0;]
            \end{xy}.
        \end{equation*}
        Dualizing and projectivizing, we obtain the sequence
        \begin{equation*}
            \begin{xy}
                \morphism<850,0>[S`\Pb_S\left(\pi_\ast\omega_{\Cc/S}\left(\sum_{i=1}^n\sigma_i\right)^{\otimes k}\right)^\vee;\gamma]
                \morphism(850,0)/-->/<1500,0>[\Pb_S\left(\pi_\ast\omega_{\Cc/S}\left(\sum_{i=1}^n\sigma_i\right)^{\otimes
                k}\right)^\vee`\Pb_S\left(\pi^{\gl}_*\omega_{\Cc^{\gl}/S}\left(\sum_{i=3}^n\sigma_i\right)^{\otimes k}\right)^\vee;]
            \end{xy}
        \end{equation*}
        where the dashed arrow indicates the projection from the point $\gamma$.

        The first map gives the desired section
		\begin{equation*}
            \begin{xy}
                \morphism<850,0>[S`\Pb_S\left(\pi_\ast\omega_{\Cc/S}\left(\sum_{i=1}^n\sigma_i\right)^{\otimes k}\right)^\vee;\gamma]
            \end{xy}.
		\end{equation*}
        We must still show that $\gamma$ factors through $\ell_{\sigma_1,\sigma_2}$.  We have the map
		\begin{equation*}
            \begin{xy}
                \morphism<850,0>[\Cc`\Pb_S\left(\pi_\ast\omega_{\Cc/S}\left(\sum_{i=1}^n\sigma_i\right)^{\otimes k}\right)^\vee;\eta]
                \morphism(850,0)/-->/<1500,0>[\Pb_S\left(\pi_\ast\omega_{\Cc/S}\left(\sum_{i=1}^n\sigma_i\right)^{\otimes
                k}\right)^\vee`\Pb_S\left(\pi^{\gl}_*\omega_{\Cc^{\gl}/S}\left(\sum_{i=3}^n\sigma_i\right)^{\otimes k}\right)^\vee;]
            \end{xy}
		\end{equation*}
        which comes from restricting the linear system $\pi_\ast\omega_{\Cc/S}\left(\sum_{i=1}^n\sigma_i\right)^{\otimes k}$ to
        the sections in $\pi^{\gl}_*\omega_{\Cc^{\gl}/S}\left(\sum_{i=3}^n\sigma_i\right)^{\otimes k}$. These sections, by
        construction, agree on $\sigma_1$ and $\sigma_2$.  Thus, this composition factors through $\Cc^{\gl}$, and $\gamma$
        factors through $\ell_{\sigma_1,\sigma_2}$.
    \end{proof}

    \subsubsection*{2. Verifying that Projecting Gives an Embedding}
    It remains to show that projecting from $\gamma$ induces an embedding $\eta^{\gl}$ of the glued curve $\Cc^{\gl}$. Because
    the projection from $\gamma$ is a map over $S$, it suffices to check that it gives an embedding on fibers. Therefore,
    throughout this section, we assume that $S=\Spec(\kappa)$ for a field $\kappa$.

    The following proposition provides the basis for our approach.
    \begin{proposition}\cite[Proposition IV.3.7]{Har:77}\label{IV.3.7}
        Let $\kappa$ be a field, let $\Cc$ be a curve in $\Pb^3_\kappa$, let $\gamma$ be a $\kappa$-point not on $\Cc$, and let
        $\eta':\Cc\to \Pb^2_\kappa$ be the morphism determined by projection from $\gamma$.  Then $\eta'$ is birational onto its
        image and $\eta'(\Cc)$ has at most nodes as singularities if and only if:
        \begin{enumerate}
             \item $\gamma$ lies on only finitely many secants of $\Cc$,
             \item $\gamma$ is not on any tangent line of $\Cc$,
             \item $\gamma$ is not on any secant with coplanar tangent lines, and
             \item $\gamma$ is not on any multisecant of $\Cc$.
        \end{enumerate}
    \end{proposition}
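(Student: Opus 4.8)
The plan is to prove the asserted equivalence in two stages: first characterize when $\eta'$ is birational onto its image, and then, granting birationality, analyze the local structure of the image curve $\Cc':=\eta'(\Cc)$ at each of its points. Throughout I take $\Cc$ to be a nonsingular projective curve over an algebraically closed field, which is the setting of \cite[IV.3]{Har:77}. Since $\gamma\notin\Cc$, no line through $\gamma$ lies inside $\Cc$, so every fiber of $\eta'$ is finite; as $\Cc$ is proper, $\eta'$ is a finite morphism and $\Cc'$ is a closed plane curve, so the notions of birationality and of nodal singularities make sense.

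First I would show that $\eta'$ is birational onto its image if and only if condition (1) holds. The fiber of $\eta'$ over $\eta'(P)$ is the intersection of $\Cc$ with the line $\overline{\gamma P}$, which equals $\{P\}$ unless that line is a secant through $\gamma$. Hence $\eta'$ is generically one-to-one, and therefore birational onto $\Cc'$, precisely when only finitely many points of $\Cc$ lie on secants through $\gamma$, i.e. when $\gamma$ lies on only finitely many secants. (Condition (2) below, which forces $\eta'$ to be generically unramified, also supplies the separability needed to upgrade generic injectivity to birationality in positive characteristic.)

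Assuming (1), the map $\eta'\colon\Cc\to\Cc'$ is a finite birational morphism from a smooth curve, hence the normalization of $\Cc'$; so $\Cc'$ has only nodes iff at each of its points it is either smooth or an ordinary node, and its singular points occur exactly where $\eta'$ is ramified or identifies distinct points. Now $\eta'$ is ramified at $P$ iff the projective tangent line $T_P\Cc$ passes through $\gamma$, in which case the image acquires a cusp instead of a smooth branch; excluding this on all of $\Cc$ is exactly condition (2), and it renders $\eta'$ an immersion. Granting this, a point $R\in\Cc'$ is singular iff the line $\overline{\gamma R}$ is a secant meeting $\Cc$ in points $P_1,\dots,P_m$ with $m\ge 2$, and near $R$ the curve $\Cc'$ is the union of $m$ smooth branches. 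Such a configuration is an ordinary node iff $m=2$ and the two branches meet transversally. Requiring $m\le 2$ for every secant through $\gamma$ is precisely condition (4), and for a genuine double point I would check that the two image branches are transverse iff the tangent lines $T_{P_1}\Cc$ and $T_{P_2}\Cc$ are not coplanar: the tangent direction of the $i$-th branch at $R$ is cut out by the plane $\Pi_i:=\langle\gamma,T_{P_i}\Cc\rangle$, and since $\Pi_i$ contains both $\gamma$ and $P_i$ it automatically contains the secant $\overline{P_1P_2}$; thus $\Pi_1=\Pi_2$ iff $T_{P_1}\Cc$ and $T_{P_2}\Cc$ span a common plane, which is exactly the configuration excluded by condition (3).

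Assembling the three local observations with the birationality criterion yields both directions: if (1)--(4) hold then $\eta'$ is birational and every point of $\Cc'$ is smooth or an ordinary node, whereas a failure of (1) destroys birationality, and a failure of (2), (3), or (4) produces respectively a cusp, a tacnode, or a point of multiplicity at least $3$, none of which is a node. I expect the main obstacle to be the differential-projective bookkeeping in the third step: identifying the tangent direction of each image branch with the projection of the corresponding tangent line of $\Cc$, and then translating "distinct branch tangents" into the clean statement that $T_{P_1}\Cc$ and $T_{P_2}\Cc$ are non-coplanar. One must also invoke the standard local fact that two smooth branches meeting transversally form an ordinary node, while tangency of the branches or the presence of a third branch does not, which is what pins down exactly when the image singularity is nodal.
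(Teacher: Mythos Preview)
The paper does not prove this proposition: it is quoted verbatim as \cite[Proposition IV.3.7]{Har:77} and used only as motivation for the subsequent Proposition~\ref{prop:embed}, so there is no ``paper's own proof'' to compare against. Your sketch is essentially the standard Hartshorne argument and is correct: you correctly identify (1) as the birationality criterion, (2) as the unramifiedness/no-cusp condition, (4) as the bound on branch multiplicity, and (3) as the transversality condition, with the key geometric observation that the tangent direction of each image branch is cut by the plane $\langle\gamma,T_{P_i}\Cc\rangle$, which already contains the secant line.
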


    Now let $(\Cc,\{\sigma_i\}_{i=1}^n,\eta)$ be a marked $k$-log-canonically embedded curve over the field $\kappa$. We
    show that any $\kappa$-point $\gamma\in\ell_{\sigma_1,\sigma_2}$ on the line spanned by $\sigma_1$ and $\sigma_2$ satisfies
    an analogue of Proposition \ref{IV.3.7}. As a first step, we have:
    \begin{lemma}\label{lemma:disjointglue}
        Let $k\ge 2$, and let $(\Cc,\{\sigma_i\}_{i=1}^n,\eta)$ be a marked $k$-log-canonically embedded curve over the field $\kappa$, and suppose $\Cc$ is a disjoint union $\Cc_1\sqcup\Cc_2$ with $\sigma_1\in\Cc_1$ and $\sigma_2\in\Cc_2$. Then the projection from a point
        $\gamma\in\ell_{\sigma_1,\sigma_2}\setminus\{\sigma_1,\sigma_2\}$ is an isomorphism on $\Cc\setminus
        \{\sigma_1,\sigma_2\}$ with a nodal singularity at $\sigma=\ell_{\sigma_1,\sigma_2}$. Further, if all components of $\Cc$ have arithmetic genus 0, then the result holds for $k\ge 1$.
    \end{lemma}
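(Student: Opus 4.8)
The plan is to reduce the statement to elementary facts about complementary linear subspaces, exploiting the disjoint-union hypothesis. Write $\Cc=\Cc_1\sqcup\Cc_2$ with $\sigma_1$ on $\Cc_1$ and $\sigma_2$ on $\Cc_2$, set $L:=\omega_\Cc(\sum_{i=1}^n\sigma_i)^{\otimes k}$ and $L_j:=L|_{\Cc_j}$, so that $\eta$ is a closed embedding of $\Cc$ into $\Pb^N:=\Pb_\kappa(H^0(\Cc,L))^\vee$. Since $\Cc$ is a disjoint union, $H^0(\Cc,L)=H^0(\Cc_1,L_1)\oplus H^0(\Cc_2,L_2)$ and the restriction maps are the two projections; as $\eta$ is the embedding by the complete linear system $|L|$ (Definition~\ref{def:markedembed}), it follows that $\Pb^N$ contains disjoint \emph{complementary} linear subspaces $\Lambda_1,\Lambda_2$ (that is, $\dim\Lambda_1+\dim\Lambda_2=N-1$) with $\eta(\Cc_j)\subseteq\Lambda_j$, and $\eta|_{\Cc_j}\colon\Cc_j\to\Lambda_j$ is the embedding by the complete linear system $|L_j|$; being the restriction of the closed embedding $\eta$ to the open-and-closed subscheme $\Cc_j$, it is itself a closed embedding. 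As $\sigma_j$ is a marked point of a stable curve, $\eta(\sigma_j)$ is moreover a \emph{smooth} point of $\eta(\Cc_j)$.

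I would then use two elementary facts about complementary subspaces, both immediate once one writes the ambient vector space as $U_1\oplus U_2$ with $\Lambda_j=\Pb(U_j)$ and $\gamma$ as the line spanned by $a_1+a_2$ with $a_j\in U_j\setminus\{0\}$: (i) the line $\ell_{\sigma_1,\sigma_2}$ meets $\Lambda_1$ only in $\eta(\sigma_1)$ and $\Lambda_2$ only in $\eta(\sigma_2)$; and (ii) through any point of $\Pb^N\setminus(\Lambda_1\cup\Lambda_2)$ there passes a \emph{unique} line meeting both $\Lambda_1$ and $\Lambda_2$. From (i), $\ell_{\sigma_1,\sigma_2}\cap\Cc\subseteq\ell_{\sigma_1,\sigma_2}\cap(\Lambda_1\cup\Lambda_2)=\{\sigma_1,\sigma_2\}$, so every $\gamma\in\ell_{\sigma_1,\sigma_2}\setminus\{\sigma_1,\sigma_2\}$ lies off $\Cc$ (indeed off $\Lambda_1\cup\Lambda_2$); in particular $\pi_\gamma$ is a morphism on all of $\eta(\Cc)$, and, since $\gamma\notin\Lambda_j$, $\pi_\gamma$ restricts to a closed \emph{linear} embedding of $\Lambda_j$ into $\Pb^{N-1}$. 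Consequently $\eta^{\gl}|_{\Cc_j}:=\pi_\gamma\circ\eta|_{\Cc_j}$ is again a closed embedding, onto a smooth curve contained in the linear subspace $\pi_\gamma(\Lambda_j)$.

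Next I would compute the fibres of $\eta^{\gl}$. If $\eta^{\gl}(p)=\eta^{\gl}(q)$ with $p\neq q$, then $\gamma,\eta(p),\eta(q)$ are collinear; $p$ and $q$ cannot both lie on $\Cc_j$, for then $\overline{\eta(p)\eta(q)}\subseteq\Lambda_j$, which misses $\gamma$. So, say, $p\in\Cc_1$ and $q\in\Cc_2$; the line $\overline{\eta(p)\eta(q)}$ passes through $\gamma$ and meets both $\Lambda_1$ and $\Lambda_2$, hence equals $\ell_{\sigma_1,\sigma_2}$ by (ii), and then (i) forces $\eta(p)=\eta(\sigma_1)$ and $\eta(q)=\eta(\sigma_2)$, i.e. $\{p,q\}=\{\sigma_1,\sigma_2\}$. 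Thus $\eta^{\gl}$ is injective on $\Cc\setminus\{\sigma_1,\sigma_2\}$, identifying $\sigma_1$ with $\sigma_2$ and nothing else; combined with the fact that $\eta^{\gl}$ is a closed embedding on each component, this shows $\eta^{\gl}$ restricts to an isomorphism of $\Cc\setminus\{\sigma_1,\sigma_2\}$ onto an open subscheme of $\eta^{\gl}(\Cc)$.

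Finally, at $\sigma:=\pi_\gamma(\ell_{\sigma_1,\sigma_2})=\eta^{\gl}(\sigma_1)=\eta^{\gl}(\sigma_2)$, the branches $\eta^{\gl}(\Cc_1)$ and $\eta^{\gl}(\Cc_2)$ are smooth, with tangent lines lying in $\pi_\gamma(\Lambda_1)$ and $\pi_\gamma(\Lambda_2)$ respectively; by (i)--(ii) these linear subspaces of $\Pb^{N-1}$ satisfy $\dim\pi_\gamma(\Lambda_1)+\dim\pi_\gamma(\Lambda_2)=N-1=\dim\Pb^{N-1}$ and meet in the single point $\sigma$, hence transversally, so the two tangent lines at $\sigma$ are distinct. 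Two smooth branches through a point with distinct tangent directions form a node (completed local ring $\kappa[[u,v]]/(uv)$); therefore $\eta^{\gl}(\Cc)$ is nodal at $\sigma$, and $\eta^{\gl}$ factors as the quotient $\Cc\to\Cc^{\gl}=\Cc/\sigma_1\sim\sigma_2$ followed by an isomorphism $\Cc^{\gl}\cong\eta^{\gl}(\Cc)$. The one step requiring real care, and the one I expect to be the main obstacle, is this last one: checking that the singularity is \emph{exactly} a node (no worse) and that the scheme-theoretic image is $\Cc^{\gl}$ rather than a nonreduced thickening; the transversality of $\pi_\gamma(\Lambda_1)$ and $\pi_\gamma(\Lambda_2)$ together with the explicit local model $\kappa[[u,v]]/(uv)$ is what makes this go through. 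The hypotheses $k\ge 2$, resp. $k\ge 1$ when all components have genus $0$, enter only through the standing assumption that $\eta$ (hence each $\eta|_{\Cc_j}$) is a genuine embedding; cf.\ Lemma~\ref{lemma:gluesec}.
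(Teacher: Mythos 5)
Your proof is correct and follows essentially the same route as the paper's: both rest on the observation that the two components land in disjoint complementary linear subspaces $\Lambda_1,\Lambda_2$, so that $\ell_{\sigma_1,\sigma_2}$ is the unique secant through $\gamma$ and no tangent line meets $\gamma$. Your argument is in fact somewhat more careful than the paper's at two points the paper leaves implicit --- the uniqueness of the transversal line through a point off $\Lambda_1\cup\Lambda_2$, and the verification via transversality of $\pi_\gamma(\Lambda_1)$ and $\pi_\gamma(\Lambda_2)$ that the image singularity at $\sigma$ is exactly a node.
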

    \begin{proof}
        By Lemma \ref{lemma:gluesec}, our assumptions on $\Cc$ and $k$ guarantee the existence of $\gamma$. The fibers of the projection restricted to $\Cc$ are intersections with lines through $\gamma$. In other words, any line through $\gamma$ intersecting $\Cc$ in more than one point is a fiber where the map is non-injective. Thus, to show the map is injective, it suffices to show that $\gamma$ lies on a unique secant line of $\Cc$.  To see that it is an isomorphism, we
        note that, if $\gamma$ lies on a unique secant, then the projection from $\gamma$ will be an isomorphism at any point where the line intersects the curve transversely, so we just need to rule out the existence of tangent lines to $\Cc$ containing $\gamma$.

        We first observe that any $k$-log-canonical embedding of $\Cc=\Cc_1\sqcup\Cc_2$ will embed $\Cc_i$ in disjoint projective subspaces, which, up to a projective linear transformation, we can take to be
        \begin{equation*}
            \Pb(H^0(\Cc_a,\omega_{\Cc_a}(D_a)^{\otimes k}))\subset \Pb(H^0(\Cc,\omega_{\Cc}(\sum_i\sigma_i)^{\otimes k})),
        \end{equation*}
        where $D_a$ denotes the divisor $\Cc_a\cap\sum_i\sigma_i$ for $a=1,2$. From this, we immediately see that, if $\gamma\in\ell_{\sigma_1,\sigma_2}\setminus\{\sigma_1,\sigma_2\}$, then $\gamma$ is not contained in $\Pb(H^0(\Cc_a,\omega_{\Cc_a}(D_a)^{\otimes k}))$ for $a=1,2$.  Therefore, $\gamma$ does not lie on any tangent line of $\Cc$. Similarly, if $p,q\in\Cc$ are two points in $\Cc_a$, then the secant line $\ell_{p,q}$ is contained in $\Pb(H^0(\Cc_a,\omega_{\Cc_a}(D_a)^{\otimes k}))$ and therefore does not contain $\gamma$.  From this we also see that $\Cc$ has no multisecants connecting $\Cc_1$ and $\Cc_2$.  Therefore, for any four points $\sigma_1\neq p\in\Cc_1$ and $\sigma_2\neq q\in\Cc_2$ are in general position, and $\ell_{\sigma_1,\sigma_2}\cap\ell_{p,q}=\emptyset$. We conclude that $\ell_{\sigma_1,\sigma_2}$ is the unique secant containing $\gamma$.
    \end{proof}

    For $k\ge 5$ and no assumptions on $\sigma_1,\sigma_2$, we can prove a stronger statement than the conditions of Proposition \ref{IV.3.7}. A priori, it suffices to change the first requirement so that $\gamma$ lies on a unique secant $\ell_{\sigma_1,\sigma_2}$.  Using $\ell_{p,q}$ for the line between $p$ and $q$,we now rephrase (and strengthen) the four criteria as:
    \begin{enumerate}
        \item for all $p,q\in \Cc$, $\ell_{p,q}\cap\ell_{\sigma_1,\sigma_2}=\emptyset$ unless $\{p,q\}\cap
            \{\sigma_1,\sigma_2\}\neq \emptyset$,
        \item for all $p\in\Cc$, $T_p \Cc\cap \ell_{\sigma_1,\sigma_2}=\emptyset$ unless $p\in \{\sigma_1,\sigma_2\}$,
        \item for all $p,q\in \Cc$, $T_p\Cc\cap T_q\Cc=\emptyset$ unless $p=q$, and
        \item $\Cc$ has no multisecant.
    \end{enumerate}
    We can further simplify as follows. Recall that the \emph{length} of a zero-dimensional scheme $X$ over a field $\kappa$ is the dimension of the $\kappa$-vector space $H^0(X,\Oc_X)$. We now note that conditions 1--3 are special cases of the same thing. In particular, if we begin with condition 1, and take the limit as $q$ approaches $p$, we arrive at condition 2, and as $\sigma_1$
    approaches $\sigma_2$, we get 3.  Second, $\Cc$ has no multisecants if and only if $\Cc$ has no trisecants. With these changes, the statement becomes the following.
    \begin{proposition}\label{prop:embed}
        Let $\Cc$ be a curve in $\Pb^N_\kappa$, and let $\sigma_1,\sigma_2\in\Cc$. Then the projection from a point
        $\gamma\in\ell_{\sigma_1,\sigma_2}\setminus\{\sigma_1,\sigma_2\}$ is an isomorphism on $\Cc\setminus
        \{\sigma_1,\sigma_2\}$ if:
        \begin{enumerate}
            \item $\Cc\subset\Pb^N_\kappa$ has no trisecant.
            \item No length 4 sub-scheme of $\Cc$ is contained in a plane.
        \end{enumerate}
    \end{proposition}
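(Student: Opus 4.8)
The plan is to deduce the statement from the classical criterion for when projection from a point $\gamma\notin\Cc$ restricts to an isomorphism onto its image on an open subcurve, in scheme-theoretic form: \emph{if $\gamma\notin\Cc$, then $\pi_\gamma$ restricts to an isomorphism on $\Cc\setminus\{\sigma_1,\sigma_2\}$ (i.e.\ maps it isomorphically onto its image) if and only if no length $2$ subscheme $Z\subseteq\Cc$ with support disjoint from $\{\sigma_1,\sigma_2\}$ has $\gamma$ on the line $\langle Z\rangle$ it spans.} A length $2$ subscheme supported at two distinct points is a secant and one supported at a single point is an embedded tangent vector, so this single condition is exactly the conjunction of ``$\pi_\gamma$ separates the points of $\Cc\setminus\{\sigma_1,\sigma_2\}$'' and ``$\pi_\gamma$ separates tangent vectors along $\Cc\setminus\{\sigma_1,\sigma_2\}$'', which is the scheme-theoretic repackaging, sketched in the discussion before the proposition, of the relevant specializations of conditions (1)--(4) of Proposition \ref{IV.3.7}. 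Since the hypotheses and conclusion are geometric, I would first base change to $\overline{\kappa}$.

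Next I would clear away the set-theoretic bookkeeping. If $\gamma\in\Cc$, then $\sigma_1$, $\sigma_2$, $\gamma$ would be three distinct points of $\Cc$ on $\ell_{\sigma_1,\sigma_2}$, making that line a trisecant and contradicting (1); so $\gamma\notin\Cc$, the map $\pi_\gamma$ is defined on all of $\Cc$, and since no line through $\gamma$ lies on $\Cc$ every fiber of $\pi_\gamma|_\Cc$ is finite, whence $\pi_\gamma|_\Cc$ is finite. The same reasoning shows that if $\pi_\gamma(p)=\sigma$ for some $p\in\Cc$, where $\sigma$ is the common image of $\sigma_1$ and $\sigma_2$, then $p\in\ell_{\sigma_1,\sigma_2}\cap\Cc$, and hence $p\in\{\sigma_1,\sigma_2\}$; thus $\pi_\gamma^{-1}(\sigma)=\{\sigma_1,\sigma_2\}$ and $\pi_\gamma$ maps $\Cc\setminus\{\sigma_1,\sigma_2\}$ finitely onto the open subscheme $\pi_\gamma(\Cc)\setminus\{\sigma\}$. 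It then remains only to verify the criterion above.

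The core of the argument is a two-case analysis. Suppose, for contradiction, that $Z\subseteq\Cc$ is a length $2$ subscheme with support disjoint from $\{\sigma_1,\sigma_2\}$ and $\gamma\in\ell_Z:=\langle Z\rangle$, a line through $\gamma$. If $\ell_Z=\ell_{\sigma_1,\sigma_2}$, then $\Cc\cap\ell_{\sigma_1,\sigma_2}$ contains the disjoint union of $Z$ with the reduced scheme $\{\sigma_1,\sigma_2\}$, hence has length at least $3$, so $\ell_{\sigma_1,\sigma_2}$ is a trisecant, contradicting (1). If $\ell_Z\neq\ell_{\sigma_1,\sigma_2}$, then these two distinct lines meet in the point $\gamma$ and therefore span a plane $\Pi$; since $\Pi$ contains $Z$ as well as $\sigma_1$ and $\sigma_2$, the intersection $\Cc\cap\Pi$ contains the disjoint union of $Z$ with the reduced scheme $\{\sigma_1,\sigma_2\}$, a length $4$ subscheme of $\Cc$ lying in a plane, contradicting (2). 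Either way we reach a contradiction, so no such $Z$ exists and the proposition follows.

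I expect the main obstacle to be establishing the scheme-theoretic projection criterion at the needed level of generality: over a field that need not be algebraically closed, and with $\Cc$ allowed to be singular (at a node, for instance, the length $2$ subschemes supported at the point form a pencil and sweep out an embedded tangent plane rather than a single tangent line), one must confirm that ``no length $2$ subscheme of $\Cc\setminus\{\sigma_1,\sigma_2\}$ has $\gamma$ on its span'' really is equivalent to $\pi_\gamma$ restricting to an isomorphism there, i.e.\ that it faithfully encodes conditions (1)--(4) of Proposition \ref{IV.3.7} after the reductions indicated in the text. Once that is in place the two-case analysis is short. I would also note that the same analysis shows $\pi_\gamma$ identifies $\sigma_1$ with $\sigma_2$ and identifies no other points, and that a brief further check using (2) (applied to the length $4$ subscheme $2\sigma_1+2\sigma_2$) shows the image acquires an ordinary node at $\sigma$; the proposition as stated, however, asks only for the isomorphism on $\Cc\setminus\{\sigma_1,\sigma_2\}$, which the argument above establishes.
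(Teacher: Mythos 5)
Your proof is correct and follows essentially the same route as the paper: the dichotomy you run on a secant or tangent line through $\gamma$ --- either it coincides with $\ell_{\sigma_1,\sigma_2}$ and hypothesis (1) gives a trisecant, or it spans a plane with $\ell_{\sigma_1,\sigma_2}$ and hypothesis (2) gives a quadrisecant plane --- is exactly the paper's argument. Your repackaging of secants and tangent vectors as length~$2$ subschemes, and your attention to finiteness, base change to $\overline{\kappa}$, and the behavior at nodes, adds rigor to what the paper treats informally, but it is the same proof.
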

	\begin{proof}
        As noted above, the fibers of the projection restricted to $\Cc$ are intersections with lines through $\gamma$. The absence of a        trisecant line guarantees that fibers consist of at most two points.  The lack of a quadrisecant plane guarantees that
        there is only one line through $\gamma$ which intersects the curve in at least two points.  Thus, away from
        $\ell_{\sigma_1,\sigma_2}$, the projection map is injective on the curve $\Cc$.  As noted above, to see that it is an isomorphism, it remains to rule out the existence of tangent lines to $\Cc$ containing $\gamma$.  Such a line would, along with
        $\ell_{\sigma_1,\sigma_2}$ give a length 4 sub-scheme of $\Cc$ contained in a plane (through $\gamma$); by hypothesis,
        none exists.
	\end{proof}

    We now verify that every marked $k$-log-canonically embedded curve satisfies the conditions of the proposition.
    \begin{lemma}\label{lemma:trisec}
        Let $\Cc=\bigcup\Cc_a$ be a nodal curve (with irreducible components $\Cc_a$) of arithmetic genus $g$ over a field
        $\kappa$.  Let $g_a$ be the geometric genus of the normalization $\Cc^\nu_a$ of $\Cc_a$.  Let $L$ be a line bundle of
        degree $d$ on $\Cc$, let $L_a$ be the pullback to $\Cc_a^\nu$ of $L$, and let $d_a:=\deg L_a$. Assume that, for all $a$,
        $d_a\geq 2g_a+2+j_a$, where $j_a$ is the number of preimages of nodes in $\Cc^\nu_a$. Then $\Cc$ has no trisecant lines
        when embedded by the complete linear system $|L|$.
    \end{lemma}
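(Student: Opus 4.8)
The plan is to recast the assertion cohomologically and then reduce it to a vanishing statement on the components of the normalization. A line in $\Pb\big(H^0(\Cc,L)\big)^\vee$ is a trisecant of $\Cc$ precisely when it contains a length-$3$ subscheme of $\Cc$; since $|L|$ gives a closed immersion, such a subscheme then spans only that line, i.e.\ it fails to impose independent conditions on $|L|$, and conversely any length-$3$ subscheme failing to impose independent conditions spans a line, which is a trisecant. So I would reduce to the claim that for every length-$3$ subscheme $Z\subset\Cc$ the restriction map
\begin{equation*}
    H^0(\Cc,L)\longrightarrow H^0\big(Z,L|_Z\big)
\end{equation*}
is surjective. (The same argument applied to subschemes of length $\le 2$ shows along the way that $L$ is very ample, so the embedding in the statement exists.)

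First I would pass to the normalization $\nu\colon\Cc^\nu=\bigsqcup_a\Cc^\nu_a\to\Cc$. Taking global sections in the conductor sequence \eqref{normkSES} identifies $H^0(\Cc,L)$ with the subspace of $\bigoplus_a H^0(\Cc^\nu_a,L_a)$ consisting of those tuples whose values agree at the two preimages of each node; equivalently, a global section of $L$ is a section of $\nu^\ast L$ subject to one linear ``matching'' condition per node. The numerical content of the hypothesis is then the elementary observation that for any effective divisor $E_a$ on $\Cc^\nu_a$ with $\deg E_a\le j_a+3$ one has $\deg\big(L_a(-E_a)\big)\ge 2g_a-1$, hence $H^1(\Cc^\nu_a,L_a(-E_a))=0$ and $H^0(\Cc^\nu_a,L_a)\to H^0\big(E_a,L_a|_{E_a}\big)$ is onto.

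Now fix a length-$3$ subscheme $Z$ and a target $t\in H^0(Z,L|_Z)$. I would work through the possible local shapes of $Z$: at a point of the smooth locus of $\Cc$ it is a jet $mp$ with $m\le 3$, and at a node $q$ it is, after separating the two branches, one of a short list of types ($V(x^m,y)$ supported on a single branch, $\mathfrak m_q^2$, a reduced point, or one of these combined with a length-$\le 2$ piece elsewhere). In each case, realizing $t$ unwinds to a finite list of requirements on each component $\Cc^\nu_a$: prescribe the jet of the section at each preimage of $\operatorname{Supp}Z$ lying on $\Cc^\nu_a$, and prescribe the value at every other node-preimage on $\Cc^\nu_a$, the values being fixed once and for all so as to agree across every node (the constant term of $t$ at a node determines this common value, and does so compatibly for the two branches). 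The total degree of the jet data so prescribed on any single $\Cc^\nu_a$ is at most $j_a+3$, with equality exactly when $Z$ lies in the smooth locus of $\Cc_a$ — which is precisely the configuration for which $d_a\ge 2g_a+2+j_a$ is sharp. By the vanishing of the previous paragraph the requirements on each component can be met, and the resulting sections patch to an element of $H^0(\Cc,L)$ restricting to $t$; this proves surjectivity, and hence the lemma.

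I expect the only genuine work to be the case analysis in the last step: one has to list the local types of a length-$3$ subscheme at a node of $\Cc$ and check, type by type, that surjectivity onto $\Oc_Z$ is really governed by jet data on $\Cc^\nu$ of the stated shape and size — in particular that the ``value at the node'' extracted from $t$ can be imposed on both branches at once without conflict. This bookkeeping, rather than any cohomological difficulty, is the main obstacle; once it is in place the degree count is uniform and the hypothesis $d_a\ge 2g_a+2+j_a$ is exactly what the worst case (a length-$3$ jet at a smooth point) demands.
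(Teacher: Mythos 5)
Your strategy is correct in outline but is a genuinely different, and much longer, route than the paper's. The paper never leaves $\Cc$: a degree-$3$ effective divisor $T$ lies on a line iff it imposes only two conditions on $|L|$, and comparing Riemann--Roch (Proposition \ref{prop:RR}) for $L$ and for $L(-T)$ and applying Serre duality shows this failure is measured exactly by $h^0(\Cc,\omega_{\Cc}(T)\otimes L^{-1})-h^0(\Cc,\omega_{\Cc}\otimes L^{-1})$; both terms vanish because, by Lemma \ref{lemma:normoverk}, $\omega_{\Cc}(T)\otimes L^{-1}$ has degree $2g_a+2+j_a-d_a<0$ on every component of the normalization. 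That is the entire proof --- no patching, no case analysis. What your approach buys is a statement about arbitrary length-$3$ subschemes $Z$ rather than effective divisors, which is arguably closer to what ``no trisecant line'' should mean at a node of the embedded curve; what it costs is exactly the bookkeeping you defer, and your numerical input ($\deg L_a(-E_a)\ge 2g_a-1$ for $\deg E_a\le j_a+3$) is the same vanishing the paper uses.

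That deferred bookkeeping is also where your writeup is currently wrong. Your list of local types of a length-$3$ subscheme at a node omits the curvilinear subschemes that cross the node, e.g.\ $Z=V(x^2-\lambda y)\subset V(xy)$ with $\lambda\neq 0$, which is abstractly $\kappa[x]/(x^3)$ but meets both branches. For a section $s=a+b_+x+b_-y+c_+x^2+\cdots$ one has $s|_Z=a+b_+\epsilon+(c_++\lambda^{-1} b_-)\epsilon^2$, so controlling $s|_Z$ requires the $3$-jet on one branch \emph{and} the $2$-jet on the other: jet data of total degree $5$, not $3$, at the two preimages of the node. In particular your claim that equality in the bound $\deg E_a\le j_a+3$ occurs ``exactly when $Z$ lies in the smooth locus'' is false --- this case also saturates it, since $5+(j_a-2)=j_a+3$. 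The bound itself does survive (one checks that for a colength-$3$ ideal of $\kappa[[x,y]]/(xy)$ the minimal powers $x^{m_+},y^{m_-}$ it contains satisfy $m_++m_-\le 5$), so your argument can be completed; but as written the case analysis is incomplete and mis-stated at precisely the step you yourself identify as the crux.
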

    \begin{proof}
        A trisecant is an effective divisor $T$ of degree 3 that is contained in a line. For a curve embedded by the complete
        linear system of a line bundle $L$, this condition on $T$ can be rewritten as $h^0(\Cc,L)-h^0(\Cc,L(-T))=2$.
        Riemann--Roch (Proposition \ref{prop:RR}) tells us that
        \begin{eqnarray*}
            h^0(\Cc,L)-h^1(\Cc,L)&=&d-g+1,\text{ and}\\
            h^0(\Cc,L(-T))-h^1(\Cc,L(-T))&=&d-g-2.
        \end{eqnarray*}
        Applying Serre duality and then subtracting one from the other, we get
        \begin{equation*}
            \left(h^0(\Cc,L)-h^0(\Cc,L(-T))\right)-\left(h^0(\Cc,\omega_{\Cc}\otimes L^{-1})-h^0(\Cc,\omega_{\Cc}(T)\otimes
            L^{-1})\right)=3.
        \end{equation*}
        This equation implies that, in order to show that $T$ is not a trisecant, it suffices to show that
        \begin{equation*}
            h^0(\Cc,\omega_{\Cc}\otimes L^{-1})-h^0(\Cc,\omega_{\Cc}(T)\otimes L^{-1})=0.
        \end{equation*}
        In particular, it suffices to show that both terms vanish. A line bundle can be shown to have no global sections by
        checking that there is no component on which the degree is positive. Thus, we want $2g_a-2+j_a-d_a<0$ and
        $2g_a-2+3+j_a-d_a<0$. We see that $d_a\ge 2g_a+2+j_a$ suffices for both.
    \end{proof}

    \begin{lemma}\label{lemma:quadrisec}
        In the situation of Lemma \ref{lemma:trisec}, assume that, for all $a$, $d_a\geq 2g_a+3+j_a$. Then, when embedded by the
        complete linear system $|L|$, $\Cc$ has no quadrisecant planes.
    \end{lemma}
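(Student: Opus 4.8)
The proof should run in close parallel to that of Lemma \ref{lemma:trisec}. A quadrisecant plane is an effective divisor $T$ of degree $4$ contained in a $\Pb^2$, i.e.\ a $2$-plane in the ambient $\Pb^N$. For a curve embedded by the complete linear system $|L|$, imposing that the $4$ points of $T$ lie on a plane is the condition $h^0(\Cc,L)-h^0(\Cc,L(-T))=3$ (rather than the generic value $4$). So I would first record, as in Lemma \ref{lemma:trisec}, the two Riemann--Roch equations
\begin{eqnarray*}
    h^0(\Cc,L)-h^1(\Cc,L)&=&d-g+1,\text{ and}\\
    h^0(\Cc,L(-T))-h^1(\Cc,L(-T))&=&d-g-3,
\end{eqnarray*}
then apply Serre duality to each $h^1$ term and subtract, obtaining
\begin{equation*}
    \left(h^0(\Cc,L)-h^0(\Cc,L(-T))\right)-\left(h^0(\Cc,\omega_{\Cc}\otimes L^{-1})-h^0(\Cc,\omega_{\Cc}(T)\otimes L^{-1})\right)=4.
\end{equation*}
Hence $T$ fails to be a quadrisecant as soon as $h^0(\Cc,\omega_{\Cc}(T)\otimes L^{-1})-h^0(\Cc,\omega_{\Cc}\otimes L^{-1})=0$, and in particular it suffices that both of these $h^0$'s vanish.

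**Carrying it out.** To kill both global-section groups I would again use the elementary observation that a line bundle on a nodal curve has no global sections if its pullback to every component of the normalization has negative degree (apply the normalization sequence \eqref{normkSES}, or just note a section restricts to a section on each $\Cc_a^\nu$, which must then vanish). The relevant line bundles are $\omega_\Cc\otimes L^{-1}$ and $\omega_\Cc(T)\otimes L^{-1}$; pulling back to $\Cc^\nu_a$, using that $\deg\omega_{\Cc^\nu_a}=2g_a-2$ and that $\omega_\Cc$ pulls back to $\omega_{\Cc^\nu_a}(P_a)$ where $P_a$ is the divisor of the $j_a$ node-preimages, the degrees become $2g_a-2+j_a-d_a$ and, since $\deg T_a\le 4$, at most $2g_a-2+4+j_a-d_a = 2g_a+2+j_a-d_a$. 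Both are negative precisely when $d_a\ge 2g_a+3+j_a$, which is the hypothesis. This establishes the vanishing of the difference of $h^0$'s, hence that no degree-$4$ divisor $T$ spans only a plane, i.e.\ $\Cc$ has no quadrisecant planes.

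**Expected obstacle.** The argument is essentially routine once the numerology is set up; the only real subtlety is the bookkeeping for the divisor $T$ on a reducible nodal curve: $T$ is a length-$4$ subscheme (supported on the smooth locus, or at least one must be careful if it meets nodes), so $\deg T_a$ — the portion of $T$ on the component $\Cc^\nu_a$ — can be anything from $0$ to $4$, and I must make sure the bound $\deg T_a\le 4$ is all that is used, uniformly in $a$. One should also confirm that the cohomological reformulation "$T$ lies in a plane $\iff h^0(L)-h^0(L(-T))=3$" is correctly applied even when $T$ is non-reduced (tangency conditions), just as in the trisecant case; this is standard, since the span of a subscheme $T$ in $\Pb(H^0(L)^\vee)$ has projective dimension $h^0(L)-h^0(L(-T))-1$. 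No genuinely new idea beyond Lemma \ref{lemma:trisec} is needed — only the shift from "degree $\ge 2g_a+2+j_a$" to "degree $\ge 2g_a+3+j_a$," reflecting the extra point in $T$.
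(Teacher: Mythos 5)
Your proposal is correct and follows the paper's proof essentially verbatim: the same Riemann--Roch/Serre duality computation reducing the quadrisecant-plane condition to the vanishing of $h^0(\Cc,\omega_{\Cc}\otimes L^{-1})$ and $h^0(\Cc,\omega_{\Cc}(T)\otimes L^{-1})$, and the same componentwise degree bound $2g_a+2+j_a-d_a<0$ under the hypothesis $d_a\ge 2g_a+3+j_a$. Your remark about only needing $\deg T_a\le 4$ on each component is exactly the (implicit) bookkeeping in the paper's one-line degree computation.
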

    \begin{proof}
        Let $T$ be an effective divisor of degree 4 on $\Cc$ that is contained in a plane.  Then, by a similar calculation as in
        the proof of Lemma \ref{lemma:trisec}, the divisor $T$ must satisfy
        \begin{equation*}
            h^0(\Cc,\omega_{\Cc}(T)\otimes L^{-1})-h^0(\Cc,\omega_{\Cc}\otimes L^{-1})=1.
        \end{equation*}
        By the degree condition, $\omega_{\Cc}\otimes L^{-1}$ already has negative degree on each component, and so has no
        global
        sections. Therefore, $T$ is contained in a plane if and only if
        \begin{equation*}
            h^0(C,\omega_{\Cc}(T)\otimes L^{-1})=1.
        \end{equation*}
        However, we can compute that the degree on each component is
        \begin{equation*}
            2g_a-2+j_a+4-d_a=2g_a+2+j_a-d_a.
        \end{equation*}
        By hypothesis, this is negative.
    \end{proof}

    A direct computation now shows that if $(\Cc,\{\sigma_i\}_{i=1}^n)$ is a disjoint union of stable curves, then
    $\deg\left(\omega_{\Cc}\left(\sum_{i=1}^n\sigma_i\right)^{\otimes k}\right)$ satisfies the conditions of Lemma
    \ref{lemma:quadrisec} (and thus Lemma \ref{lemma:trisec}) so long as $k\ge 5$.
    \begin{corollary}[Claim \ref{glueemb}]
        Let $k\ge 5$. Let $S$ be a scheme, and let $(\Cc,\{\sigma_i\}_{i=1}^n,\eta)$ be a marked, $k$-log-canonically
        embedded curve over $S$ (as in Theorem \ref{thm:glue}). Then the projection from $\gamma$ in $\Pb_S\left(\pi_\ast\omega_{\Cc/S}\left(\sum_{i=1}^n\sigma_i\right)^{\otimes k}\right)^\vee$ induces an embedding $\eta^{\gl}$ of $\Cc^{\gl}$ in $\Pb_S\left(\pi_\ast\omega_{\Cc^{\gl}/S}\left(\sum_{i=3}^n\sigma_i\right)^{\otimes k}\right)^\vee$.
    \end{corollary}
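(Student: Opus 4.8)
The plan is to reduce to geometric fibers and there invoke Proposition~\ref{prop:embed}, handling the glued node by a short direct argument. Everything in sight is already defined over $S$: the section $\gamma$ produced by Lemma~\ref{lemma:gluesec} lies in $\ell_{\sigma_1,\sigma_2}$, projection away from it is a rational map landing --- by the short exact sequence of that lemma --- in $\Pb_S\bigl(\pi^{\gl}_*\omega_{\Cc^{\gl}/S}(\sum_{i=3}^n\sigma_i)^{\otimes k}\bigr)^\vee$, and the proof of Lemma~\ref{lemma:gluesec} shows that the composite $\Cc\to\Pb_S(\cdots)^\vee\dashrightarrow\Pb_S(\cdots)^\vee$ factors through $\Cc^{\gl}$, giving $\eta^{\gl}$. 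Since $\Cc^{\gl}\to S$ is flat and projective (it is the gluing of a flat projective family along two disjoint sections) and the target is a projective bundle over $S$, a standard fibral criterion reduces the assertion that $\eta^{\gl}$ is a closed immersion to the same assertion over each geometric point (for the finite morphism $\eta^{\gl}$ this is surjectivity of $\Oc\to\eta^{\gl}_\ast\Oc_{\Cc^{\gl}}$, checked fiberwise by Nakayama). So it suffices to treat $S=\Spec(\kappa)$.

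Over a field, $\Cc$ is a disjoint union of stable marked curves embedded in some $\Pb^N_\kappa$ by the complete linear system $|\omega_{\Cc}(\sum_{i=1}^n\sigma_i)^{\otimes k}|$ (with $N+1$ computed from Lemma~\ref{lemma:dimH0}). By the degree computation recorded just before the statement, for $k\ge 5$ this line bundle satisfies $d_a\ge 2g_a+3+j_a$ on every component, so Lemmas~\ref{lemma:trisec} and~\ref{lemma:quadrisec} apply: $\Cc$ has no trisecant line, and no length-$4$ subscheme of $\Cc$ lies in a plane. Proposition~\ref{prop:embed} then shows that projection from the point $\gamma\in\ell_{\sigma_1,\sigma_2}\setminus\{\sigma_1,\sigma_2\}$ of Lemma~\ref{lemma:gluesec} is an isomorphism of $\Cc\setminus\{\sigma_1,\sigma_2\}$ onto its image. (When $\sigma_1,\sigma_2$ lie on different connected components one may instead cite Lemma~\ref{lemma:disjointglue} and needs only $k\ge 2$, or $k\ge 1$ when all components have arithmetic genus $0$.)

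It remains to analyse $\eta^{\gl}$ near the common image $\bar\sigma$ of $\sigma_1$ and $\sigma_2$. First, $\gamma\notin\Cc$, and $\gamma$ lies on neither tangent line $T_{\sigma_1}\Cc$ nor $T_{\sigma_2}\Cc$: a further point of $\Cc\cap\ell_{\sigma_1,\sigma_2}$, or a tangency at $\sigma_i$ together with passage through $\sigma_j$, would be a length-$3$ divisor on a line, hence a trisecant. Consequently the projection from $\gamma$ is unramified at $\sigma_1$ and $\sigma_2$, carries both to the point $\bar\sigma$ which is the image of $\ell_{\sigma_1,\sigma_2}$, and sends no other point of $\Cc$ to $\bar\sigma$ (that, too, would produce a trisecant). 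Finally, the two branches through $\bar\sigma$ receive distinct tangent directions, for otherwise $\ell_{\sigma_1,\sigma_2}$, $T_{\sigma_1}\Cc$ and $T_{\sigma_2}\Cc$ would be coplanar, i.e.\ the length-$4$ divisor $2\sigma_1+2\sigma_2$ would lie in a plane, against Lemma~\ref{lemma:quadrisec}. Thus $\eta^{\gl}$ is finite and injective, an isomorphism onto its image away from $\bar\sigma$, and maps a neighbourhood of the node of $\Cc^{\gl}$ onto two smooth branches crossing transversally at $\bar\sigma$; so $\eta^{\gl}$ is a closed immersion. By the exact sequence of Lemma~\ref{lemma:gluesec} its defining sections are precisely $\pi^{\gl}_*\omega_{\Cc^{\gl}/S}(\sum_{i=3}^n\sigma_i)^{\otimes k}$, a complete linear system of the dimension predicted by Lemma~\ref{lemma:dimH0}, so $\eta^{\gl}$ is the claimed $k$-log-canonical embedding of $\Cc^{\gl}$.

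The main obstacle I anticipate is the local analysis at $\bar\sigma$: separating the two branches and ruling out any collapse onto $\bar\sigma$ is exactly where the quadrisecant-plane hypothesis of Lemma~\ref{lemma:quadrisec} --- not merely the trisecant bound --- is genuinely used, and one must also take care that the section $\gamma$ really avoids $\Cc$ and both tangent lines so that the projection is a morphism unramified at $\sigma_1$ and $\sigma_2$. The reduction to fibers is a secondary point needing a line of justification, resting on the flatness and properness of the glued family $\Cc^{\gl}\to S$.
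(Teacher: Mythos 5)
Your proof follows the paper's route exactly: reduce to geometric fibers, verify that for $k\ge 5$ the degree bounds of Lemmas~\ref{lemma:trisec} and~\ref{lemma:quadrisec} exclude trisecant lines and quadrisecant planes, and apply Proposition~\ref{prop:embed} to conclude that projection from $\gamma$ is an isomorphism away from $\sigma_1,\sigma_2$. Your explicit local analysis at the glued point $\bar\sigma$ (that $\gamma$ avoids $\Cc$ and both tangent lines, so the projection is unramified at $\sigma_1,\sigma_2$, and that the two branches cross transversally because $2\sigma_1+2\sigma_2$ cannot lie in a plane) is a correct supplement to detail the paper leaves implicit in its appeal to Proposition~\ref{IV.3.7}.
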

    This concludes the proof of Theorem \ref{thm:glue}.
\end{proof}

\section{Moduli of Pluri-Log-Canonically Embedded Curves}
We now introduce a smooth DM stack $\oEc^k_{g,n}$ parameterizing stable curves $\Cc$, of genus $g$, with $n$ marked points $\{\sigma_i\}_{i=1}^n$ in the smooth locus of $\Cc$, equipped with a projective embedding $\eta$ by the complete linear system
$\omega_{\Cc}\left(\sum_{i=1}^n\sigma_i\right)^{\otimes k}$.

\begin{definition}\label{def:loghilb}
    Let $k\ge 2$. We define the \emph{moduli stack of $k$-log-canonically embedded marked curves} $\oEc_{g,n}^k$ to be the DM stack representing the functor
    \begin{equation*}
        S\mapsto\{(\Cc,\{\sigma_i\}_{i=1}^n,\eta)\}
    \end{equation*}
    which maps a scheme $S$ to the groupoid of stable curves of genus $g$ with $n$-marked points and a $k$-log-canonical embedding $\eta\colon \Cc\to \Pb_S(\pi_\ast\omega_{\Cc/S}\left(\sum_{i=1}^n \sigma_i\right)^{\otimes k})^\vee$. When $g=0$, we can take $k\ge 1$, in which case $\oEc_{0,n}^k$ is in fact a scheme.
\end{definition}

\begin{remark}
    We see that $\oEc_{g,n}^k$ is representable and smooth over $\Spec(\Zb)$ as follows. Denote by $V(g,n,k)\to\oMc_{g,n}$ the ``$k$-log-Hodge'' bundle, whose fiber at $(\Cc,\{\sigma_i\})$ is given by $H^0(\Cc,\omega_{\Cc}(\sum_i\sigma_i)^{\otimes k})^\vee$. Because any two projective embeddings by a complete linear system differ by a change of basis, $\oEc_{g,n}^k\to\oMc_{g,n}$ is a torsor for the relative group scheme $\PGL(V(g,n,k))\to\oMc_{g,n}$ of projective linear automorphisms of $V(g,n,k)$. Further, the torsor $\oEc_{g,n}^k$ has a section, given by sending a curve $\Cc$ to the embedding which sends a point $x\in\Cc$ to the hyperplane $H_x\subset H^0(\Cc,\omega_{\Cc}(\sum_i\sigma_i)^{\otimes k})$ consisting of sections vanishing at $x$.  Therefore the torsor trivializes, and $\oEc_{g,n}^k$ is isomorphic to $\PGL(V(g,n,k))$ over $\oMc_{g,n}$.  In less elementary fashion, one could also directly exhibit a smooth atlas for $\oEc_{g,n}^k$ by a construction analogous to the construction of $\oMc_{g,n}$ from the Hilbert scheme.
\end{remark}

Every $S$-point of $\oEc_{g,n+2}^k$ or $\oEc_{g_1,n_1+1}^k\times\oEc_{g_2,n_2+1}^k$ determines an embedded curve satisfying the
conditions of Theorem \ref{thm:glue}.\footnote{For $\oEc_{g_1,n_1+1}^k\times\oEc_{g_2,n_2+1}^k$, take the disjoint union of the
factors.} Because the section $\gamma$ in Theorem \ref{thm:glue} is natural with respect to base change, Theorem \ref{thm:glue}
immediately implies the following.
\begin{corollary}\label{cor:forget}
    For each $k\ge 2$ (and $k\ge 1$ for $g_1=g_2=0$), there exists a map
    \begin{align*}
        \oEc_{g_1,n_1+1}^k\times\oEc_{g_2,n_2+1}^k&\to\oEc_{g_1+g_2,n_1+n_2}^k.\intertext{For $k\ge 5$, there exists a map}
        \oEc_{g,n+2}^k&\to\oEc_{g+1,n}^k.
    \end{align*}
    These maps fit into commuting squares
    \begin{equation*}
        \begin{xy}
            \square(-1500,0)<1000,500>[\oEc_{g_1,n_1+1}^k\times\oEc_{g_2,n_2+1}^k`\oEc_{g_1+g_2,n_1+n_2}^k`\oMc_{g_1,n_1+1}\times\oMc_{g_2,n_2+1}`\oMc_{g_1+g_2,n_1+n_2};```]
            \place(0,250)[\text{and}]
            \square(500,0)<750,500>[\oEc_{g,n+2}^k`\oEc_{g+1,n}^k`\oMc_{g,n+2}`\oMc_{g+1,n};```]
        \end{xy}.
    \end{equation*}
\end{corollary}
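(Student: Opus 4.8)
The plan is to derive Corollary \ref{cor:forget} directly from Theorem \ref{thm:glue} by checking that the construction there is functorial in $S$, hence glues to a morphism of moduli stacks, and that the resulting square commutes essentially by definition. First I would note that an $S$-point of $\oEc_{g_1,n_1+1}^k\times\oEc_{g_2,n_2+1}^k$ is a pair of $k$-log-canonically embedded stable curves; taking their disjoint union $\Cc=\Cc_1\sqcup\Cc_2$ over $S$ (with $\sigma_1$ the last marked point of $\Cc_1$ and $\sigma_2$ the last marked point of $\Cc_2$, say) produces a marked $k$-log-canonically embedded curve in the sense of Definition \ref{def:markedembed}, to which Theorem \ref{thm:glue} applies for $k\ge 2$ (or $k\ge 1$ when both genera vanish, by the last sentence of the theorem, since all components then have arithmetic genus $0$). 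Likewise, an $S$-point of $\oEc_{g,n+2}^k$ is directly a marked $k$-log-canonically embedded curve, and Theorem \ref{thm:glue} applies for $k\ge 5$. In each case Theorem \ref{thm:glue}\eqref{gluesec} produces the section $\gamma\colon S\to\ell_{\sigma_1,\sigma_2}$ and Theorem \ref{thm:glue}\eqref{glueemb} produces the embedding
\begin{equation*}
    \eta^{\gl}\colon\Cc^{\gl}\to\Pb_S\left(\pi^{\gl}_\ast\omega_{\Cc^{\gl}/S}\left(\textstyle\sum_{i=3}^n\sigma_i\right)^{\otimes k}\right)^\vee,
\end{equation*}
and $(\Cc^{\gl},\{\sigma_i\}_{i\ge 3},\eta^{\gl})$ is again a $k$-log-canonically embedded stable curve, of genus $g_1+g_2$ (resp.\ $g+1$) with $n_1+n_2$ (resp.\ $n$) marked points, because gluing two smooth points of stable curves along disjoint sections yields a stable curve and the embedding $\eta^{\gl}$ is by the complete linear system of the stated bundle. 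This defines the object map of the desired morphism.

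Next I would check functoriality. The key input is that every step in the proof of Theorem \ref{thm:glue} commutes with arbitrary base change $T\to S$: the short exact sequence of Lemma \ref{lemma:canSES} is defined integrally and pulls back; the vanishing of $R^1\pi^{\gl}_\ast$ used in Lemma \ref{lemma:gluesec} holds fiberwise (by the degree bounds and Grothendieck's theorem on cohomology and base change, the relevant pushforwards are locally free and their formation commutes with base change); hence the short exact sequence
\begin{equation*}
    0\to\pi^{\gl}_\ast\omega_{\Cc^{\gl}/S}\left(\textstyle\sum_{i=3}^n\sigma_i\right)^{\otimes k}\to\pi_\ast\omega_{\Cc/S}\left(\textstyle\sum_{i=1}^n\sigma_i\right)^{\otimes k}\to\Oc_S\to 0
\end{equation*}
pulls back to the corresponding sequence over $T$, and so the section $\gamma$ and the projection are compatible with base change. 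This exhibits a natural transformation of the moduli functors, hence a morphism of the representing stacks. Since $\oEc_{g_1,n_1+1}^k\times\oEc_{g_2,n_2+1}^k$, $\oEc_{g,n+2}^k$, $\oEc_{g_1+g_2,n_1+n_2}^k$ and $\oEc_{g+1,n}^k$ all represent these functors (Definition \ref{def:loghilb}), the natural transformation is the sought-after map of stacks. The commuting squares then follow immediately: the forgetful maps $\oEc^k_{g,n}\to\oMc_{g,n}$ drop the embedding $\eta$, and the gluing of underlying stable marked curves $(\Cc_1,\Cc_2)\mapsto\Cc_1\cup_{\sigma_1\sim\sigma_2}\Cc_2$ (resp.\ $\Cc\mapsto\Cc/\sigma_1\sim\sigma_2$) is precisely the underlying operation on the target of our new map, so forgetting $\eta^{\gl}$ recovers the $\oMc$-level gluing; i.e.\ both composites around each square send $(\Cc,\eta)$ to the glued stable curve.

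The main obstacle is the base-change compatibility of the higher direct image vanishing, i.e.\ verifying that $R^1\pi^{\gl}_\ast$ of the three sheaves in the exact sequence of Lemma \ref{lemma:gluesec} vanishes \emph{and stays zero after pullback}, and correspondingly that the $\pi_\ast$ and $\pi^{\gl}_\ast$ terms are locally free of the expected rank with formation commuting with base change. This is where one needs the degree hypotheses ($k\ge 2$, or $k\ge 1$ in genus $0$) to guarantee $H^1$ vanishes on every geometric fiber — which is exactly what Lemma \ref{lemma:dimH0} and the degree computations in its proof provide — so that cohomology and base change applies. Once this is in place, everything else is formal: the section $\gamma$ of Theorem \ref{thm:glue} was already asserted to ``depend functorially in $S$,'' the embedding criterion of Proposition \ref{prop:embed} is checked fiberwise (Lemmas \ref{lemma:trisec}, \ref{lemma:quadrisec}, and \ref{lemma:disjointglue}), and representability of all four stacks is Definition \ref{def:loghilb}. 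I would therefore organize the write-up as: (1) observe the $S$-points give curves satisfying Theorem \ref{thm:glue}; (2) invoke Theorem \ref{thm:glue} to get $(\Cc^{\gl},\eta^{\gl})$ and note it is again a $k$-log-canonically embedded stable curve of the right numerical type; (3) cite the base-change naturality of $\gamma$ (already built into Theorem \ref{thm:glue}) to get a natural transformation, hence a morphism of stacks; (4) read off commutativity of the squares from the fact that forgetting $\eta^{\gl}$ yields the standard gluing on $\oMc$.
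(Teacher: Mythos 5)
Your proposal is correct and follows essentially the same route as the paper: the paper likewise observes that an $S$-point of $\oEc_{g,n+2}^k$ or of $\oEc_{g_1,n_1+1}^k\times\oEc_{g_2,n_2+1}^k$ (taking the disjoint union in the latter case) satisfies the hypotheses of Theorem \ref{thm:glue}, and then deduces the corollary from the naturality of the section $\gamma$ under base change, with commutativity of the squares coming from the fact that forgetting the embedding recovers the classical gluing on $\oMc$. Your added discussion of cohomology and base change just fills in the detail behind the paper's assertion that $\gamma$ ``depends functorially in $S$.''
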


\begin{remark}
    In the case $g=0,~k=1$, the discussion of Section \ref{sec:glue} could be carried out for embedded stable genus 0 curves $(\Cc,\{\sigma_i\}_{i=1}^n,\eta)$ which are further equipped with the canonical isomorphism
    \begin{equation*}
       \varphi\colon\Pb^{n-2}_S\to^\cong\Pb_S\left(\pi_\ast \omega_{\Cc/S}(\sum_{i=1}^n\sigma_i)\right)^\vee.
    \end{equation*}
    which sends the standard coordinate points of $\Pb^{n-2}$ to the $n$ points in general position $\{\sigma_i\}_{i=1}^n$ . We could then consider the functor
    \begin{equation*}
        S\mapsto\{(\Cc,\{\sigma_i\}_{i=1}^n,\eta,\varphi)\}
    \end{equation*}
    which maps an $S$-scheme to a stable, marked log-canonically embedded curve of genus 0 with the specified trivialization of the ambient projective bundle.  In the notation of the previous remark, this functor is represented by the closed subscheme $\Hc_{0,n}^1$ of the Hilbert scheme of $\Pb^{n-2}$. Kapranov \cite{MR1203685} has shown that the forgetful map $\Hc_{0,n}^1\to \oMc_{0,n}$ is an isomorphism.  Kapranov's work can thus be understood as a first instance of gluing maps for log-canonically embedded stable curves.

    For larger $g$ and $k$, one could similarly consider \emph{framed} $k$-log-canonically embedded stable marked curves, i.e. curves as above equipped with a collection of $(2k-1)(g-1)+nk+1$ points in general position containing the $n$ marked points as a subset. The constructions of Section \ref{sec:glue} extend to framed $k$-log-canonically embedded curves. However, we do not know, even in the genus 0 case for $k>1$, how to ensure that the gluing maps for framed curves satisfy the necessary equivariance properties required of a cyclic (and thus a modular) operad as required for the next section.
\end{remark}

\section{Modular Operads of Embedded Curves}
In this section, we prove Theorem \ref{thm:main}. For the reader familiar with modular operads, we remark that, given the above
construction of the gluing maps, the only non-trivial point which remains is to prove that the gluing maps are associative. For
the rest of our readers, we begin by recalling the definition of modular operads and stating what it is we need to show. Readers
familiar with these notions should feel free to skip the following paragraph.

\subsubsection*{Review of Modular Operads}
Our goal in this paragraph is to provide a minimal list of things one must produce to exhibit a modular operad. For a more
elegant and thorough treatment, we refer the reader to the article \cite{GeK:98}, which we take as our primary reference.

\begin{definition}
    Denote by $S_n$ the permutation group on $n$ elements $\{1,\ldots,n\}$.\footnote{By convention, $S_0$ is the trivial group,
    i.e. the group of automorphisms of the empty set.} For $\pi\in S_m$, $\rho\in S_n$, and $1\le i\le m$, denote by
    \begin{equation*}
        \pi\circ_i\rho\in S_{m+n-1}
    \end{equation*}
    the permutation which re-orders $\{i,\ldots,i+n-1\}$ according to $\rho$ and then re-orders the set of sets
    $\{\{1\},\ldots,\{i-1\},\{i,\ldots,i+n-1\},\{i+n\},\ldots,\{m+n-1\}\}$ by $\pi$. Explicitly,
    \begin{equation*}
        (\pi\circ_i\rho)(j):=\left\{
            \begin{array}{lr}
                \pi(j) & j<i\text{ and }\pi(j)<\pi(i),\\
                \pi(j)+n-1 & j<i\text{ and }\pi(j)>\pi(i),\\
                \pi(i)+\rho(j-i+1)-1 & i-1<j<i+n,\\
                \pi(j-n+1) & j\ge i+n\text{ and }\pi(j-n+1)<\pi(i),\\
                \pi(j-n+1)+n-1 & j\ge i+n\text{ and }\pi(j-n+1)>\pi(i).
            \end{array}\right.
    \end{equation*}
\end{definition}

Let $(\Dc,\otimes,\sigma)$ be a symmetric monoidal category\footnote{$\sigma$ denotes the symmetry isomorphism.} such that
$\otimes$ preserves coproducts, for example $\Dc$ could be the category of DM-stacks with the Cartesian product. We further
assume that there exists an initial object $0\in\Dc$; e.g. $0$ could be the DM-stack $\emptyset$.
\begin{definition}\label{def:operad}
    An operad in $(\Dc,\otimes,\sigma)$ consists of:
    \begin{enumerate}
        \item for each non-negative integer $n\in\Nb$, an object $\Pc(n)\in\Dc$ with a homomorphism $S_n\to\Aut(\Pc(n))$, and
        \item for each $1\le i\le m$, a map $\circ_i\colon\Pc(m)\otimes\Pc(n)\to\Pc(m+n-1)$.
    \end{enumerate}
    We require that these satisfy the following conditions. For $\pi\in S_m$, $\rho\in S_n$, and $1\le i\le m$, we require
    \begin{equation}
        (\pi\circ_i\rho)\cdot\circ_i=\circ_{\pi(i)}\cdot(\pi\otimes\rho)\label{opeq}
    \end{equation}
    as maps $\Pc(m)\otimes\Pc(n)\to\Pc(m+n-1)$.

    For $1\le i<j\le \ell$, we require
    \begin{align}
        \circ_{j+m-1}\cdot(\circ_i\otimes 1_{\Pc(n)})&=\circ_i\cdot(\circ_j\otimes
        1_{\Pc(m)})\cdot(1_{\Pc(\ell)}\otimes\sigma)\label{opass1}\intertext{and for $1\le i\le\ell$ and $1\le j\le m$, we
        require}
        \circ_{i+j-1}\cdot(\circ_i\otimes 1_{\Pc(n)})&=\circ_i\cdot(1_{\Pc(\ell)}\otimes\circ_j)\label{opass2}
    \end{align}
    as maps $\Pc(\ell)\otimes\Pc(m)\otimes\Pc(n)\to\Pc(\ell+m+n-2)$.

    We define a map of operads $\Pc^1\to\Pc^2$ in the obvious manner, i.e. it consists of a collection of equivariant maps
    $\Pc^1(n)\to\Pc^2(n)$ for all $n\in\Nb$ which intertwine the various maps $\circ_i$ for $\Pc^1$ and $\Pc^2$.
\end{definition}

\begin{remark}\label{rem:op}
    To make sense of these axioms, it is helpful to picture $\Pc(n)$ as a collection of labels for trees with one outgoing leaf
    and $n$ incoming leaves marked $1,\ldots,n$; the group $S_n$ acts by permuting the markings of the incoming leaves. In this
    picture, the map $\circ_i$ corresponds to gluing the outgoing leaf of a tree in $\Pc(n)$ to the $i^{th}$-incoming leaf of a
    tree in $\Pc(m)$ to obtain a tree in $\Pc(n+m-1)$. Axiom \eqref{opeq} requires that if we first relabel and then glue, this
    is equivalent to gluing first and then relabelling in the natural fashion. Axioms \eqref{opass1} and \eqref{opass2} require
    the gluing of three trees to be associative in the natural fashion.
\end{remark}

Denote by $S_{n+}$ the permutation group on $n+1$ letters $\{0,\ldots,n\}$. Denote by $\tau_n$ the cycle $(01\cdots n)$.
\begin{definition}
    A \emph{cyclic operad} is an operad $\Pc$ in $(\Dc,\otimes,\sigma)$ such that, for each $n\in\Nb$, the $S_n$-action on
    $\Pc(n)$ extends to an $S_{n+}$-action\footnote{Under the embedding $S_n\hookrightarrow S_{n+}$ corresponding to the
    inclusion $\{1,\ldots,n\}\subset\{0,\ldots,n\}$.} and such that
    \begin{equation}
        \tau_{n+m-1}\cdot\circ_m =\circ_1\cdot(\tau_n\otimes\tau_m)\cdot\sigma\label{cyceq}
    \end{equation}
    as maps $\Pc(m)\otimes\Pc(n)\to\Pc(m+n-1)$.

    We define maps of cyclic operads in the obvious manner.
\end{definition}

\begin{remark}
    In the picture of Remark \ref{rem:op}, the objects $\Pc(n)$ of a cyclic operad can be pictured as a collection of labels for
    trees with one outgoing and $n$ incoming leaves, where we are also allowed the permute the outgoing leaf with the incoming
    leaves. Equivalently, we can view $\Pc(n)$ as a collection of labeled trees with $n+1$ leaves marked $0,\ldots,n$, where
    $S_{n+}$ acts by permuting the markings on the leaves. Axiom \eqref{cyceq} requires that if we first relabel using the extra
    symmetry in $S_{n+}$ and then glue, this is equivalent to gluing first and relabelling in the natural fashion.
\end{remark}

\begin{notation}
    If $\Pc$ is a cyclic operad, we write $\Pc((n+1))$ for the object $\Pc(n)$. In this notation, we have
    \begin{equation*}
        \circ_i\colon\Pc((m))\otimes\Pc((n))\to\Pc((m+n-2))
    \end{equation*}
    for $m,n>1$. We will also consider cyclic operads $\Pc$ for which we define $\Pc((0))$. However, we do not assume the
    existence of maps $\circ_i$ with source $\Pc((m))\otimes\Pc((n))$ for either $m$ or $n$ equal to $0$.
\end{notation}

\begin{definition}
    A \emph{stable cyclic operad} is a cyclic operad $\Pc$ such that for each non-negative integer $n\in\Nb$, there exists an
    $S_n$-equivariant decomposition
    \begin{equation*}
        \Pc((n)):=\coprod_{g\in\Nb} \Pc((g,n))
    \end{equation*}
    such that $\Pc((g,n))=0$ if $n<3-2g$, and such that, for all $1\le i\le m$ and $n>0$, the map $\circ_i$ restricts to a map
    \begin{equation*}
        \Pc((g,m))\otimes\Pc((h,n))\to\Pc((g+h,m+n-2)).
    \end{equation*}
\end{definition}

\begin{remark}
    In a stable cyclic operad, we can picture the object $\Pc((g,n))$ as a collection of labels for dual graphs of stable curves
    of genus $g$ with $n$ marked points. In this picture, the maps $\circ_i$ correspond to gluing the first leg of a graph in
    $\Pc((h,n))$ to the $i^{th}$ leg of a graph in $\Pc((g,m))$, and relabelling the remaining legs accordingly.
\end{remark}

\begin{definition}
    Let $n\ge 2$, let $\rho\in S_n$, and let $i\neq j\in \{1,\ldots,n\}$. Denote by $\rho_{\setminus\{i,j\}}\in S_{n-2}$ the
    induced bijection
    \begin{equation*}
        \{1,\ldots,n-2\}\to^\cong\{1,\ldots,n\}\setminus\{i,j\}\to^\rho
        \{1,\ldots,n\}\setminus\{\rho(i),\rho(j)\}\to^\cong\{1,\ldots,n-2\},
    \end{equation*}
    where the first and last bijections are the canonical order-preserving bijections.
\end{definition}

\begin{definition}\label{def:modularoperad}
    A \emph{modular operad} is a stable cyclic operad $\Pc$ such that for each $g$, $n$ and $i\neq j\in\{1,\ldots,n\}$, there
    exists a map
    \begin{equation*}
        \xi_{ij}\colon\Pc((g,n))\to\Pc((g+1,n-2))
    \end{equation*}
    such that the following properties are satisfied. For each $\rho\in S_n$, we require
    \begin{equation}
        \rho_{\setminus\{i,j\}}\cdot\xi_{ij}=\xi_{\rho(i)\rho(j)}\cdot\rho\label{modeq}
    \end{equation}
    as maps $\Pc((g,n))\to\Pc((g+1,n-2))$.

    For $1\le i\ne j\ne k\ne\ell\le n$, we require
    \begin{equation}
        \xi_{ij}\cdot\xi_{k\ell}=\xi_{k\ell}\cdot\xi_{ij}\label{modass1}
    \end{equation}
    as maps $\Pc((g,n))\to\Pc((g+2,n-4))$.\footnote{Note that we are abusing notation slightly on the left hand side of
    \eqref{modass1} by writing $\xi_{ij}$ to denote the map which corresponds to the image of the pair $i,j$ under the
    identification $\{1,\ldots,n\}\setminus\{k,\ell\}\cong\{1,\ldots,n-2\}$. An analogous abuse of notation also occurs on the
    right hand side.}

    We further require
    \begin{align}
        \xi_{12}\cdot\circ_m&=\circ_{m-2}\cdot(\xi_{12}\otimes 1_{\Pc((h,n))})\label{modass2}\\
        \nonumber\\
        \xi_{m,m+1}\cdot\circ_m&=\circ_m\cdot(1_{\Pc((g,m))}\otimes \xi_{12})\label{modass3}\intertext{and}
        \xi_{m-1,m}\cdot\circ_m&=\xi_{m+n-2,m+n-1}\cdot\circ_{m-1}\cdot(1_{\Pc((g,m))}\otimes \tau_n^{-1})\label{modass4}
    \end{align}
    as maps $\Pc((g,m))\otimes\Pc((h,n))\to\Pc((g+h+1,m+n-4))$.

    We define maps of modular operads in the obvious manner.
\end{definition}

\begin{remark}
    A modular operad is a stable cyclic operad with extra structure encoded by the maps $\xi_{ij}$.  If we picture $\Pc((g,n))$
    as a collection of labels for dual graphs of stable marked curves, then the maps $\xi_{ij}$ correspond to gluing together
    the
    $i$ and $j$ legs of a graph $\Gamma$ to obtain a new graph $\Gamma'$. As usual, Axiom \eqref{modeq} requires that
    relabelling
    and then gluing is equivalent to gluing first and relabelling in the natural fashion. Similarly, Axioms
    \eqref{modass1}--\eqref{modass4} require the various operations involving gluing two pairs of legs together to be
    associative
    in the natural fashion.
\end{remark}

\subsubsection*{Proof of Theorem \ref{thm:main}}
    Because the forgetful maps $\oEc_{g,n}^k\to\oMc_{g,n}$ are $S_n$-equivariant, by Corollary \ref{cor:forget}, it suffices to
    verify that the gluing maps on $\{\oEc_{g,n}^k\}$ form a modular operad in order to conclude that the forgetful maps
    \begin{equation*}
        \oEc_{g,n}^k\to\oMc_{g,n}
    \end{equation*}
    determine a map of operads (the analogous observation applies to the cyclic and stable cyclic operads $\oEc_{0,c}^k$ and $\oEc_c^k$).

    Further, the construction of the gluing maps in the proof of Theorem \ref{thm:glue} immediately implies that the equivariance axioms \eqref{opeq}, \eqref{cyceq} and \eqref{modeq} are all satisfied in each case. Therefore, it only remains prove that the gluing maps
    satisfy the associativity properties \eqref{opass1}, \eqref{opass2} and \eqref{modass1}--\eqref{modass4}. These will all be
    immediate consequences of the following lemma.
    \begin{lemma}
        Let $k\ge 5$, let $S$ be a scheme, and let $(\Cc,\{\sigma_i\}_{i=1}^n,\eta)$ be a marked $k$-log-canonically
        embedded curve over $S$. Denote by
        \begin{align*}
            (\Cc^{\gl_{12,34}},\{\sigma_i\}_{i=5}^n,\eta^{\gl_{12,34}})
        \end{align*}
        the embedded curve obtained by first gluing $\sigma_1$ to $\sigma_2$, and then gluing $\sigma_3$ to $\sigma_4$.
        Likewise, denote by
        \begin{align*}
            (\Cc^{\gl_{34,12}},\{\sigma_i\}_{i=5}^n,\eta^{\gl_{34,12}})
        \end{align*}
        the embedded curve obtained by first gluing $\sigma_3$ to $\sigma_4$, and then gluing $\sigma_1$ to $\sigma_2$. Then
        there exists a canonical isomorphism
        \begin{equation*}
            (\Cc^{\gl_{12,34}},\{\sigma_i\}_{i=5}^n,\eta^{\gl_{12,34}})\cong(\Cc^{\gl_{34,12}},\{\sigma_i\}_{i=5}^n,\eta^{\gl_{34,12}}).
        \end{equation*}
        The same conclusion holds if $k\ge 2$ and $\{\sigma_1,\sigma_2\},~\{\sigma_3,\sigma_4\}$ lie on disjoint components of $\Cc$ and $\Cc^{\gl}$.  In addition, if all components of $\Cc$ have arithmetic genus 0, then we can take $k\ge 1$.
    \end{lemma}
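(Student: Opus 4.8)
The plan is to reduce the claimed canonical isomorphism to a statement about the underlying curves and then to check that the two embeddings agree. The key observation is that, as an abstract marked curve, $\Cc^{\gl_{12,34}}$ and $\Cc^{\gl_{34,12}}$ are literally equal: gluing $\sigma_1\sim\sigma_2$ and gluing $\sigma_3\sim\sigma_4$ are operations on disjoint pairs of sections, so forming the quotient $\Cc/(\sigma_1\sim\sigma_2,\ \sigma_3\sim\sigma_4)$ does not depend on the order. Thus there is an evident identity isomorphism $\psi\colon\Cc^{\gl_{12,34}}\to^\cong\Cc^{\gl_{34,12}}$ commuting with the remaining sections $\{\sigma_i\}_{i=5}^n$, and the content of the lemma is that $\psi$ intertwines the two projective embeddings $\eta^{\gl_{12,34}}$ and $\eta^{\gl_{34,12}}$.

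First I would set up the two embeddings explicitly using the construction in the proof of Theorem \ref{thm:glue}. By Lemma \ref{lemma:gluesec} applied to $(\Cc,\{\sigma_i\},\eta)$ with the pair $\sigma_1,\sigma_2$, we get the point $\gamma_{12}$ and the identification of $\pi^{\gl_{12}}_*\omega_{\Cc^{\gl_{12}}/S}(\sum_{i\ge 3}\sigma_i)^{\otimes k}$ with the subspace of $\pi_*\omega_{\Cc/S}(\sum_{i\ge 1}\sigma_i)^{\otimes k}$ cut out by the hyperplane condition "agree on $\sigma_1$ and $\sigma_2$" (equivalently, the image of the injection in the short exact sequence of Lemma \ref{lemma:canSES}). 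Iterating with the pair $\sigma_3,\sigma_4$ identifies $\pi^{\gl_{12,34}}_*\omega_{\Cc^{\gl_{12,34}}/S}(\sum_{i\ge 5}\sigma_i)^{\otimes k}$ with the subspace of $\pi_*\omega_{\Cc/S}(\sum_{i\ge 1}\sigma_i)^{\otimes k}$ cut out by \emph{both} hyperplane conditions. The same computation for the other order identifies $\pi^{\gl_{34,12}}_*\omega_{\Cc^{\gl_{34,12}}/S}(\sum_{i\ge 5}\sigma_i)^{\otimes k}$ with the subspace cut out by the same two conditions, and — crucially — the pair of conditions is symmetric in the two pairs. Hence, at the level of linear systems, the two iterated restrictions of $\pi_*\omega_{\Cc/S}(\sum_{i\ge 1}\sigma_i)^{\otimes k}$ coincide as subsheaves. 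I would make this precise by invoking Lemma \ref{lemma:canSES} twice, once for $D=\sum_{i\ge 3}\sigma_i$ on $\Cc^{\gl_{12}}$ and once for $D=\sum_{i\ne 1,2}\sigma_i$ with the roles of the two pairs swapped, and noting that both composites land on the common codimension-$2$ subspace.

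Next I would observe that the embedding $\eta^{\gl_{12,34}}$ is, by construction, the composite of $\eta$ with the double linear projection away from $\gamma_{12}$ and then (the image of) $\gamma_{34}$, and that this double projection is exactly the rational map $\Pb_S(\pi_*\omega_{\Cc/S}(\sum\sigma_i)^{\otimes k})^\vee\dashrightarrow\Pb_S(W)^\vee$ dual to the inclusion of the common codimension-$2$ subspace $W$; double projection away from two points is independent of the order in which one projects. Since $W$ is the same for both orderings and the factorization of this rational map through $\Cc^{\gl_{12,34}}$ (resp. $\Cc^{\gl_{34,12}}$) is unique, $\psi$ necessarily carries $\eta^{\gl_{12,34}}$ to $\eta^{\gl_{34,12}}$. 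That both composites really are embeddings, and not merely birational maps with nodal image, follows by applying the corollary to Lemmas \ref{lemma:trisec} and \ref{lemma:quadrisec} (in the $k\ge 5$ case) or Lemma \ref{lemma:disjointglue} (in the disjoint-component cases, where $k\ge 2$, or $k\ge 1$ in arithmetic genus $0$) to the once-glued curve, whose $k$-log-canonical degree on every component still satisfies the required bounds.

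The main obstacle, and the step I would spend the most care on, is verifying that the two iterated identifications of direct-image sheaves give \emph{literally the same} subsheaf $W\subset\pi_*\omega_{\Cc/S}(\sum\sigma_i)^{\otimes k}$, rather than merely abstractly isomorphic ones — i.e. checking the commutativity of the relevant square of short exact sequences from Lemma \ref{lemma:canSES}. This is a local (indeed fiberwise, by Nakayama) computation: over a geometric point one compares, for a suitable open cover adapted to the four sections, the two descriptions of sections of $\omega$ that are regular after allowing simple poles at $\sigma_1,\sigma_2$ with matching residues, and separately at $\sigma_3,\sigma_4$; since the two pole-and-residue conditions involve disjoint points they impose independent, order-insensitive constraints, giving the desired equality. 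Once that square commutes, everything else — equivariance, the identity $\psi$, and the agreement of embeddings — is formal, and the remaining associativity axioms \eqref{opass1}, \eqref{opass2}, \eqref{modass1}--\eqref{modass4} follow by the same bookkeeping applied to the appropriate pairs of sections on a disjoint union of the relevant curves.
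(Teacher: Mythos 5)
Your proposal is correct and follows essentially the same route as the paper: the paper's proof exhibits the same commuting square of short exact sequences of direct-image sheaves (identifying $\pi_\ast\omega_{\Cc^{\gl_{12,34}}/S}(\sum_{i\ge 5}\sigma_i)^{\otimes k}$ as the common, order-independent corank-$2$ subsheaf of $\pi_\ast\omega_{\Cc/S}(\sum_i\sigma_i)^{\otimes k}$ cut out by the two gluing conditions) and then dualizes and projectivizes to conclude that the two iterated projections coincide. Your observation that double projection away from two points is independent of order is precisely the geometric reformulation the paper records in the remark following its proof.
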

    \begin{proof}
        The associativity of the classical gluing maps for curves guarantees the existence of a canonical isomorphism
        \begin{equation*}
            (\Cc^{\gl_{12,34}},\{\sigma_i\}_{i=5}^n)\cong(\Cc^{\gl_{34,12}},\{\sigma_i\}_{i=5}^n).
        \end{equation*}
        Using this isomorphism, our observations about the vanishing of higher direct image sheaves imply that there exists a
        commuting diagram of $\Oc_S$-modules with exact rows and columns:
        \begin{equation*}
            \begin{xy}
                \square(0,500)<1750,500>[\pi_\ast\omega_{\Cc^{\gl_{12,34}}/S}\left(\sum_{i=5}^n\sigma_i\right)^{\otimes
                k}`\pi_\ast\omega_{\Cc^{\gl_{34}}/S}\left(\sum_{i\neq 3,4}\sigma_i\right)^{\otimes
                k}`\pi_\ast\omega_{\Cc^{\gl_{12}}/S}\left(\sum_{i=3}^n\sigma_i\right)^{\otimes
                k}`\pi_\ast\omega_{\Cc/S}\left(\sum_{i=1}^n\sigma_i\right)^{\otimes k};```]
                \square(1750,500)/>`>`=`>/<1000,500>[\pi_\ast\omega_{\Cc^{\gl_{34}}/S}\left(\sum_{i\neq
                3,4}\sigma_i\right)^{\otimes k}`\Oc_S`\pi_\ast\omega_{\Cc/S}\left(\sum_{i=1}^n\sigma_i\right)^{\otimes
                k}`\Oc_S;```]
                \square(0,0)/>`>`>`=/<1750,500>[\pi_\ast\omega_{\Cc^{\gl_{12}}/S}\left(\sum_{i=3}^n\sigma_i\right)^{\otimes
                k}`\pi_\ast\omega_{\Cc/S}\left(\sum_{i=1}^n\sigma_i\right)^{\otimes k}`\Oc_S`\Oc_S;```]
                \square(1750,0)<1000,500>[\pi_\ast\omega_{\Cc/S}\left(\sum_{i=1}^n\sigma_i\right)^{\otimes k}`\Oc_S`\Oc_S`0;```]
            \end{xy}.
        \end{equation*}
        Dualizing and projectivizing, we obtain a commuting diagram
        \begin{equation*}
            \begin{xy}
                \square(0,500)<1000,500>[\emptyset`S`S`\Pb_S\left(\pi_\ast\omega_{\Cc/S}\left(\sum_{i=1}^n\sigma_i\right)^{\otimes
                k}\right)^\vee;```]
                \square(1000,500)/=`>`>`-->/<1800,500>[S`S`\Pb_S\left(\pi_\ast\omega_{\Cc/S}\left(\sum_{i=1}^n\sigma_i\right)^{\otimes
                k}\right)^\vee`\Pb_S\left(\pi_\ast\omega_{\Cc^{\gl_{12}}/S}\left(\sum_{i=3}^n\sigma_i\right)^{\otimes k}\right)^\vee;```]
                \square(0,0)/>`=`-->`>/<1000,500>[S`\Pb_S\left(\pi_\ast\omega_{\Cc/S}\left(\sum_{i=1}^n\sigma_i\right)^{\otimes
                k}\right)^\vee`S`\Pb_S\left(\pi_\ast\omega_{\Cc^{\gl_{34}}/S}\left(\sum_{i\neq 3,4}\sigma_i\right)^{\otimes k}\right)^\vee;```]
                \square(1000,0)/-->`-->`-->`-->/<1800,500>[\Pb_S\left(\pi_\ast\omega_{\Cc/S}\left(\sum_{i=1}^n\sigma_i\right)^{\otimes
                k}\right)^\vee`\Pb_S\left(\pi_\ast\omega_{\Cc^{\gl_{12}}/S}\left(\sum_{i=3}^n\sigma_i\right)^{\otimes
                k}\right)^\vee`\Pb_S\left(\pi_\ast\omega_{\Cc^{\gl_{34}}/S}\left(\sum_{i\neq 3,4}\sigma_i\right)^{\otimes
                k}\right)^\vee`\Pb_S\left(\pi_\ast\omega_{\Cc^{\gl_{12,34}}/S}\left(\sum_{i=5}^n\sigma_i\right)^{\otimes k}\right)^\vee;```]
            \end{xy}
        \end{equation*}
        where the dashed arrows indicate the projections. The commutativity of the lower right square implies that
        $\eta^{\gl_{12,34}}=\eta^{\gl_{34,12}}$.
    \end{proof}

\begin{remark}
    There is an equivalent, although more manifestly geometric, formulation of the above argument.  Each of the pairs of points
    $\{\sigma_1,\sigma_2\}$ and $\{\sigma_3,\sigma_4\}$ lying over eventual nodes gives a point on the line between them.  Then,
    projection from one point followed by projection from the image of the other, in either order, is the same map as projection
    from the line spanned by the two points.  Here, the equivalence of the embeddings follows from the fact that these are just
    two factorizations of the same projection map, with one-dimensional center.
\end{remark}

\bibliographystyle{alpha}
\bibliography{Bibliography}
\end{document}